\newtheorem{theor}{Theorem}[section]
\newtheorem{lemm}[theor]{Lemma}
\newtheorem{corollary}[theor]{Corollary}
\newtheorem{proposition}[theor]{Proposition}
\newenvironment{proof}{{\em Proof.}}{\hspace*{\fill}$\Box$\par\vspace{4mm}}
\newtheorem{example}[theor]{Example}
\newtheorem{remark}[theor]{Remark}
\def\C{\mathcal C}
\def\D{\mathcal D}
\def\K{\mathcal K}
\def\Kn{\mathop{\rm Kn}}
\def\I{I}
\def\J{J}
\def\L{\lambda}
\def\Lmin{\lambda}
\def\Lmax{\lambda_1}
\def\LminC{\lambda^*_{\scriptscriptstyle \rm C}}
\def\LminK{\lambda^*_{\scriptscriptstyle \rm K}}
\def\Cay{\rm Cay}
\def\x{\times}
\def\vform{\bf }
\def\ones{{\vform 1}}
\def\zeros{{\vform 0}}
\def\vw{{\vform w}}
\def\vx{{\vform x}}
\def\vz{{\vform z }}
\begin{document}

\title{Some observations on the smallest adjacency eigenvalue of a graph}

\author{
Sebastian M. Cioab\u{a}\footnote{Department of Mathematical Sciences, University of Delaware, Newark, DE 19716-2553, USA, {\tt cioaba@udel.edu}. This research has been partially supported by NSERC, NSF grants DMS-1600768 and CIF-1815922 and a and a JSPS Invitational Fellowship for Research in Japan S19016.}
\,, 
Randall J. Elzinga\footnote{Akira Health, London, Ontario, Canada, \tt{rjelzinga@gmail.com}.}
~and
 David A. Gregory\footnote{Department of Mathematics, Queen's University at Kingston, Canada. David Gregory passed away on July 12, 2013. Part of this work was done with David while the first two authors were his graduate students in 2005.}
}

\date{\today}
\maketitle
\begin{abstract}
In this paper, we discuss various connections between the smallest eigenvalue of the adjacency matrix of a graph and its structure. There are several techniques for obtaining upper bounds on the smallest eigenvalue, and some of them are based on Rayleigh quotients, Cauchy interlacing using induced subgraphs, and Haemers interlacing with vertex partitions and quotient matrices. In this paper, we are interested in obtaining lower bounds for the smallest eigenvalue. Motivated by results on line graphs and generalized line graphs, we show how graph decompositions can be used to obtain such lower bounds.
\end{abstract}

\begin{center}{\em Dedicated to the memory of Slobodan K. Simi\'{c}}
\end{center}

\section{Introduction}

Our graph notation is standard, see \cite{BH} for undefined terms or notation. The {\em eigenvalues} of a graph $G=(V,E)$ are the eigenvalues of its adjacency matrix $A=A(G)$. For a graph $G$ with $n$ vertices and $\ell\geq 1$, denote by $\lambda_{\ell}(G)$ the $\ell$-th greatest eigenvalue of $G$ and let $\lambda^{\ell}(G)=\lambda_{n-\ell+1}(G)$ be its $l$-th smallest eigenvalue. Let $\Lmin(G)$ denote the smallest eigenvalue $\lambda^1(G)$. The smallest eigenvalue of a graph is closely related to its chromatic number and independence number \cite{BH,GM}. Since the spectrum of a connected graph is symmetric if and only if the graph is bipartite, it is natural to think of $\Lmin(G)$ as a measure of how bipartite $G$ is. It is therefore not surprising that the smallest eigenvalue has close connections to the max-cut \cite{AS,BCIM,GW,K}. There are several methods to obtain upper bounds for $\Lmin(G)$. Using Rayleigh quotients, it is well known that
\begin{equation}
\large{\Lmin(G)=\min_{\vx\neq 0}\frac{\vx^TA\vx}{\vx^T\vx}.}
\end{equation}
Depending on the context, choosing appropriate vectors can yield useful upper bounds on $\Lmin(G)$ such as the ones involving the max-cut \cite{AS,GW} or Hoffman's ratio bound on the independence number (see \cite[Section 3.5]{BH} for example). The connection between eigenvalues and Rayleigh quotients also yields important interlacing results such as Cauchy interlacing or Haemers interlacing \cite[Section 3.5]{BH}. In each case, the eigenvalues of a smaller matrix (principal submatrix of $A$ in the case of Cauchy interlacing or quotient matrix in the case of Haemers interlacing) interlace the eigenvalues of $A$ and therefore, $\Lmin(G)$ is bounded from above by the smallest eigenvalue of this smaller matrix. Again, these important methods yield interesting consequences such Cvetkovic's inertia bound for the independence number \cite[Theorem 3.5.1]{BH} or Hoffman's ratio bound for the chromatic number \cite[Theorem 3.6.2]{BH} to name just a few. In other situations, manipulations of the trace of powers of the adjacency matrix of a graph (see \cite{C1} for example) or edge perturbations in graphs (see \cite{BCRS1,BCRS2}) can yield upper bounds for $\Lmin$.

In this paper, we are interested in the finding lower bounds for the smallest eigenvalue $\Lmin(G)$ using graph decompositions. We apply our methods to various situations and we describe their successes and limitations. In general, lower bounds on the smallest eigenvalue of a graph are not easy to obtain. In \cite{AS}, Alon and Sudakov show that $\Lmin\geq -\Delta + \frac{1}{(D+1)n}$ for a nonbipartite simple graph with maximum degree $\Delta$ and diameter $D$ (see also \cite{C2,Nik} for small improvements). Trevisan \cite{Tre} obtained interesting connections between $\Lmin(G)$ and the  bipartiteness ratio $\beta(G)$ which is defined as $\min_{S\subset V:S=L\cup R}\frac{2e(L)+2e(R)+e(S,V\setminus S)}{\sum_{v\in S}d(v)}$, where the minimum is taken over all subsets $S$ of $V$ and all partitions $L\cup R$ of $S$, $e(L)$ denotes the number of edges in the subgraph induced by $L$ (similar definition for $e(R)$) and $e(S,V\setminus S)$ denotes the number of edges with exactly one endpoint in $S$. Trevisan's results are similar to the ones relating the second largest eigenvalue of a graph to its expansion/isoperimetric constant (see \cite{Alon,AM,Tre}) and for a $d$-regular graph, give the following interesting lower bound: $\Lmin(G)\geq -d+\frac{\beta^2}{d}$. However, we have not been able to use this bound for the graphs considered in this paper as the parameter $\beta$ does not seem easy to calculate.

In Section \ref{sec:gradec}, we use weighted graph decompositions of the edge set of a graph to bound the spectrum of a graph from below.
Our results are similar and have been obtained independently from the recent work of Knox and Mohar \cite{KM}. 

In Section \ref{sec:clique}, we specialize these decompositions to clique decompositions and give examples when the bounds are tight and when they are not. It is not surprising that for line graphs, generalized line graphs and point-line graphs of finite geometries, our bounds are tight, but there are many graphs where our methods do not yield tight bounds.

In Section \ref{sec:K1kfree}, we discuss the smallest eigenvalue of $K_{1,k}$-free graphs. Linial \cite{Lin} asked whether the property of the eigenvalues of line graphs to be bounded from below by an absolute constant also holds for claw-free simple graphs. In \cite{C0}, the first author showed that the answer is negative by describing a family of regular claw-free simple graphs with arbitrarily negative eigenvalues. Recently, motivated by problems in topological combinatorics, Aharoni, Alon and Berger \cite{AAB} studied the largest eigenvalue of the Laplacian of $K_{1,k}$-free graphs which when restricted to regular graphs, is equivalent to studying the smallest eigenvalue of regular $K_{1,k}$-free graphs. In Section \ref{sec:K1kfree}, we describe their results and remark that their proof actually gives a more general lower bound for the smallest eigenvalue of graphs with dense neighborhoods. In \cite{AAB}, the authors also constructed examples of $d$-regular $K_{1,k}$-free graphs with very negative $\Lmin$ by taking clique blowups of bipartite $(k-1)$-regular graphs. Their construction works when $d=ks-1$ for $k\geq 3$ and $s\geq 2$. In the case of claw-free graphs ($k=3$), their construction works for $d=3s-1$ and $s\geq 2$. In this section, we also show that every cubic claw-free graph has $\Lmin\geq -2.272$ which slightly improves the lower bound of $-2.5$ from \cite{AAB}.

\section{Smallest eigenvalue and graph decompositions}\label{sec:gradec}

In this section, we introduce graph decompositions as a means of obtaining lower bounds on the least adjacency eigenvalue of a graph. Like many such general bounds, there are cases where the estimates are strong and others where they are weak. The advantage of using decompositions lies in the flexibility of choice: for graphs with many triangles, decompositions by complete graphs are often successful,
while for graphs with few triangles, decompositions allowing paths and cycles may be more fruitful. Also, taking decompositions of an odd power of a graph can sometimes improve a lower bound.

We begin with a simple observation on matrix decompositions and then
introduce weighted graphs to capture the generality of an arbitrary symmetric matrix.
All matrices are assumed to be symmetric and have real entries.

A {\em decomposition} of a matrix $M$ of order $n$ is  a collection
$M_1, \ldots M_m$ of matrices such that
$$ M= M_1 + \cdots + M_m.$$
Using Rayleigh quotients, we quickly obtain a lower bound on the least eigenvalue of $M$:
\begin{equation} \label{trivbnd}
\Lmin(M) \ge \Lmin(M_1) + \cdots + \Lmin(M_m).
\end{equation}
The {\em support} of a matrix $M$ is the set of all row indices $i$ of $M$
such that $M_{ij}\ne 0$ for some column index $j$ of $M$.
Arbitrary real symmetric matrices may be regarded as adjacency matrices of edge-weighted graphs and,
in our application to graph decompositions, often have small support.
In such cases, the simple estimate (\ref{trivbnd}) can be improved.
We require the following notation.

A {\em weighted} graph $H=H(w)=H(V,w)$ is
a graph $H$ with vertex set $V$ together with a function $w$ that assigns a (possibly negative)
real number $w(uv)$ to each unordered vertex pair $uv$.  Also, $w(uv)=0$ if $u\not\sim v$.
This implies that $w(uu)=0$ if there is no loop on $u$ in $H$.

We say that a set $\D=\{H^j(V_j,w_j) : 1\leq j\leq m \}$ of
weighted graphs is a {\em decomposition} of
a weighted graph $H(V,w)$  and write
$$ H = H^1 +H^2 + \cdots +H^m$$
if $V_j\subseteq V$ for $1\leq j\leq m$ and
$w(uv)=\sum_{j=1}^{n} w_j(uv)$ for all unordered pairs of vertices $u,v$ of $V$.  Here,
we take $w_j(uv)=0$ if either $u$ or $v$ is not in $V_j$.
The (weighted) adjacency matrix of a weighted graph $H(V,w)$ is the symmetric matrix $A(w)$, indexed
by the vertices in $V$, with $u,v$ entry equal to $w(uv)$.
Let $\D_u$ denote the set of graphs in $\D$
that contain vertex $u$.

\begin{theor}\label{Dthm} Let
$\D=\{H^j(V_j,w_j) :1\leq j\leq m\}$  be a decomposition of a weighted graph $H=H(V,w)$.
For each vertex $u\in V$, let $\Lmin(\D_u)$ be the sum
of the minimum eigenvalues of the graphs of $\D$ that contain vertex $u$.
Then
\begin{equation} \label{Dineq}
\Lmin(H)\geq \min_u\Lmin(\D_u).
\end{equation}
Let $\Lmin(\D)=\min_u\Lmin(\D_u)$.  Then
equality holds in (\ref{Dineq}) if and only if
there is a vector $\vx\ne \zeros$ of real numbers indexed by the vertices in $V$ such that
\begin{enumerate}
\item $x_u=0$ whenever
$\Lmin(\D)<\Lmin(\D_u)$; and,
\item For each $1\leq j\leq m$, the restriction  $\vx_j$ of $\vx$ to $V_j$ is either a zero vector or
an eigenvector of $H^j$ with eigenvalue $\Lmin(H^j)$.
\end{enumerate}
\end{theor}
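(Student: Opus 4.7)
The plan is a direct Rayleigh-quotient argument. Padding each weighted adjacency matrix $A(H^j)$ with zero rows and columns so that it is indexed by all of $V$, the decomposition identity gives $A(H) = \sum_{j=1}^m A(H^j)$ as $|V|\times|V|$ symmetric matrices. For any nonzero real vector $\vx$ indexed by $V$, let $\vx_j$ denote its restriction to $V_j$. Applying the Rayleigh lower bound to each summand yields
\[
\vx^T A(H)\vx \;=\; \sum_{j=1}^m \vx_j^T A(H^j)\vx_j \;\ge\; \sum_{j=1}^m \Lmin(H^j)\, \vx_j^T \vx_j.
\]
Reordering the double sum, the right-hand side equals $\sum_{u\in V} x_u^2\, \Lmin(\D_u)$, which in turn is at least $\Lmin(\D)\, \vx^T\vx$. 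Dividing through by $\vx^T\vx$ and invoking the Rayleigh characterization of $\Lmin$ gives $\Lmin(H) \ge \Lmin(\D)$.

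For the equality statement I would simply retrace the two inequalities. The first is term-by-term and holds with equality exactly when each $\vx_j$ is either $\zeros$ or a $\Lmin(H^j)$-eigenvector of $A(H^j)$; this is condition 2. The second, $\sum_u x_u^2\, \Lmin(\D_u) \ge \Lmin(\D)\sum_u x_u^2$, holds with equality exactly when $x_u = 0$ for every $u$ with $\Lmin(\D_u) > \Lmin(\D)$; this is condition 1. Consequently, any nonzero $\vx$ satisfying both conditions produces a Rayleigh quotient equal to $\Lmin(\D)$ and forces equality in (\ref{Dineq}). Conversely, if $\Lmin(H) = \Lmin(\D)$, then any $\Lmin(H)$-eigenvector attains equality throughout the chain, so conditions 1 and 2 must hold.

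I do not foresee any real obstacle. The only delicacies are the bookkeeping for the zero-padding, which ensures the quadratic-form identity splits cleanly term-by-term across $\D$, and the observation that the weights $x_u^2$ in the final averaging step are nonnegative, so the standard equality condition for a weighted average of numbers bounded below by $\Lmin(\D)$ applies. Both points are routine, and no further ingredients (such as diagonalization or orthogonality arguments) appear to be needed.
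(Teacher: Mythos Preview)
Your proposal is correct and follows essentially the same approach as the paper. The paper phrases the argument in terms of positive semidefinite matrices---writing $M+rI$ as a sum of the PSD matrices $M_j-\Lmin(H^j)E_j$ and the nonnegative diagonal matrix $rI-R$---whereas you work directly with the Rayleigh quotient, but the two presentations are equivalent line for line, and the equality analysis is identical.
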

\begin{proof} Let $M=A$ be the adjacency matrix of $H(V,w)$ and, for
$1\leq j\leq m$,  let $M_j$ be the adjacency matrix $A_j$ of $H^j(V_j,w_j)$ augmented by zero rows and
columns indexed by vertices of $V$ not in $V_j$.
Because $H = H^1 +H^2 + \cdots +H^m$,  we have $M=M_1+M_2+\cdots+M_m$.
Let $E_j$ be the (0,1)-diagonal matrix with $(u,u)$
entry equal to 1 if vertex $u$ is in $V_j$ and 0, otherwise. The matrix
$R=-\sum_{j=1}^m\Lmin(H^j)E_j$
is a diagonal matrix with $(u,u)$ entry $r_u=-\Lmin(\D_u)$. Let
$B=\sum_{j=1}^m(M_j-\Lmin(H^j)E_j)$. Then $B$ is positive
semidefinite because each of its summands is. Therefore,
letting $r=\max_u r_u= -\Lmin(\D)$,
it follows that the matrix
$$P=M + rI = B + (rI-R)$$
is positive semidefinite. Thus,
$\Lmin(M)+r =\Lmin(M+rI) =\Lmin(P) \ge 0$ with equality
if and only if $\zeros$ is an eigenvector of $P$;
equivalently, $\Lmin(H)\ge \Lmin(\D)$ with equality if and only if $P\vx=\zeros$
for some vector $\vx\ne\zeros$.

Since $P$ is a positive semidefinite matrix, $P=N^TN$ for some matrix $N$.
Thus $P\vx=\zeros$ if and only if
$\vx^TP\vx=||N\vx||^2=0$.  Substituting for $P$ and recalling that each of its summands
is positive semidefinite, it follows that $P\vx=\zeros$
if and only if $\vx^TR\vx=\sum_u (r-r_u)x_u^2=0$  and, for each $1\leq j\leq m$,
$\vx_j^T(M_j-\Lmin(H^j)E_j)\vx_j=0$.
The first condition holds if and only if $x_u=0$ whenever $r_u<r$.
Because the matrices involved are positive semidefinite,
the second condition holds if and only if
$(A_j-\Lmin(H^j)I)\vx_j=0$ for $1\leq j\leq m$.
\end{proof}

The {\em Cartesian product} $G=G^1\Box \cdots \Box G^m$ of the simple graphs $G^i=(V_i,E_i),1\leq i\leq m$,
is the simple graph on the vertex set $V=V_1 \x \cdots \x V_m$ with two vertices $u, v\in V$
adjacent if there is an index $j$ such that $u_j\sim v_j$ in $G^j$ and $u_i=v_i$ for all $i\ne j$.
Using the Hadamard product $\otimes$, we state the well-known fact that the adjacency matrix $A(G)$ is the sum of the $m$ products of the form $I\otimes I \otimes \cdots \otimes A(G^j)\otimes \cdots \otimes I$, $1\leq j\leq m$  where the $j$-th term is a product of
$m-1$ identity matrices (of orders equal to those of the corresponding graphs) together with $A(G^j)$ in the $j$-th position.
From this it follows that if $\vx_i$ is an $\alpha_i$-eigenvector of $G^i$, $1\leq i\leq m$,
then $\vx_1\otimes \cdots \otimes \vx_m$ is an $\alpha_1+ \cdots + \alpha_m$ eigenvector of $A(G)$.
As an illustration, in the following example,
we also obtain the least eigenvalue $\Lmin(G^1)+ \cdots + \Lmin(G^m)$ of $G$ using Theorem \ref{Dthm}.

\begin{example} {\rm  ({\em Cartesian products and Hamming graphs})
Let $G=G^1\Box \cdots \Box G^m$.
Taking the induced subgraphs of $G$ on sets of vertices where
all but one of the coordinates is fixed, we obtain a decomposition
$\D$  consisting of copies of $G^i$, $1\leq i\leq m$.  Since each
vertex $u$ of $G$ is contained in one copy of $G^i$ for each $1\leq i\leq m$,
we have
$\Lmin(G)\geq \Lmin(G^1)+ \cdots + \Lmin(G^m)$ by (\ref{Dineq}).
However, finding the eigenvector that satisfies the conditions
sufficient for equality in Theorem \ref{Dthm} would be difficult
without appealing to the above form of the adjacency matrix for $G$.

A Cartesian product of complete graphs, each of order at least 2, is called a {\em Hamming graph}.
Thus, if $G$ is a Hamming graph whose vertices are $m$-tuples, then $\Lmin(G)=-m$.
}\end{example}

We frequently require the following graphs in decompositions.
The {\em loop graph} of order $n$ has a loop at each vertex and no other edges.
We use the symbol $I_n$  for both it
and its adjacency matrix (the identity matrix of order $n$).
The {\em looped complete} graph of
order $n$ is obtained by adding a loop to each vertex of $K_n$.
We use the symbol $J_n$ for both it and its adjacency matrix (the all-one matrix).
The {\em simple complete graph} of order $n$ has an edge
between each pair of {\em distinct} vertices.  We use the symbol $K_n$
to denote both it and its adjacency matrix, $J_n-I_n$.
The graph $K_1$ has no edges and is not used in decompositions.
The graph
$J_1=I_1$ is called a {\em single loop} and may appear in decompositions.
Note that although
$\Lmin(\J_n)=0$ when $n>1$, we have $\Lmin(\J_1)=1$.

For a graph $G$ and real number $c$ (possibly negative),
we write $cG$ for the weighted graph
that has constant weight function $c$ on the edges (and 0 on the non-edges).
In particular, each graph $G=1G$ may be regarded as a weighted graph with edge weights all $1$
while $-G$ is the graph $G$ with edge weights all $-1$.
Thus, $\Lmin(I_n)=1$ and $\Lmin(-I_n)=-1$.  For $n>1$, $\Lmin(J_n)=0$, but $\Lmin(J_1)=\Lmin(I_1)=1$. Also,
$\Lmin(-J_n)=-n$ for all $n\ge 1$.  Because $n>1$ for $K_n$ and $K_n=J_n-I_n$, we have $\Lmin(K_n)=-1$ while $\Lmin(-K_n)= -(n-1)$.

The next three examples use a multigraph $\hat G = G^{(k)}$ formed from a simple graph $G$; that is,
a multigraph  with adjacency matrix $A(G)^k$ where $k$ is a positive integer.
Thus, the number $w(uv)$ of edges in $G^{(k)}$ with endpoints $u,v$
is the number of $uv$-walks of length $k$ in $G$.
For example, if $G$ is a simple graph and $u,v$ are adjacent vertices in $G$
with degrees $d(u), d(v)$ and neighbour sets $N(u), N(v)$ then the number of
edges in $G^{(3)}$ with endpoints $u,v$ equals $d(u)+d(v)-1$ if $u\sim v$
and equals the number of edges between $N(u)$ and $N(v)$ if $u \not\sim v$.
If $k$ is odd, the least eigenvalues of $G$ and $G^{(k)}$ are related by the equation
$\Lmin(G)^k =\Lmin(G^{(k)})$.

\begin{example} {\rm ({\em The 5-cycle})
If $G$ is the 5-cycle $C_5$, then
$$G^{(3)}=K_5 + 2C_5.$$
Let $z=\Lmin(G)$.  Then $z^3=\Lmin(G^{(3)})$ and so, by Theorem
\ref{Dthm},
$$z^3 \ge 2z -1 \quad \mbox{or} \quad (z-1)(z^2+z-1) \ge 0.$$
Thus, $\Lmin(G)=z \ge -(1+\sqrt{5})/2\approx -1.618$. In Example \ref{srgeg},
we shall see that equality holds.
}\end{example}

\begin{example} \label{pete}{\rm ({\em The Petersen graph})
If $G$ is the Petersen graph, then
$$G^{(3)} = 3G +2K_{10}.$$
Let $z=\Lmin(G)$.  Then $z^3=\Lmin(G^{(3)})$ and so, by Theorem
\ref{Dthm},
$$z^3 \ge 3z -2 \quad \mbox{or} \quad (z-1)^2(z+2) \ge 0.$$
Thus, $\Lmin(G)=z \ge -2$.  Also we see that $\Lmin(G)=-2$,
by noticing that the 6-cycle $C_6$ is an induced subgraph of $G$.

Of course, the eigenvalues for the 5-cycle and for the
Petersen graph, indeed, for any strongly regular graph
$G(n,k,a,c)$,
may be found immediately from the equation on the adjacency matrix (see \cite[p.218]{GandR}):
\begin{equation}
A^2=kI +aA+c(J-I-A). \label{srg}
\end{equation}
For, by multiplying (\ref{srg}) by an eigenvector $\vx$ orthogonal to $\ones$
it follows that the only eigenvalues of $G$ other
than $k$ are the roots $\theta>0$ and $\tau<0$ of the quadratic equation
\begin{equation}
x^2-(a-c)x-(k-c)=0. \label{quadeqn}
\end{equation}
In particular, for the Petersen graph, $\theta=1$, $\tau=-2$.
}\end{example}

\begin{example} \label{srgeg}{\rm ({\em Strongly regular graphs })
For a strongly regular graph $G$, taking $H=G^{(3)}$ in Theorem
\ref{Dthm}
often leads to the exact value of $\Lmin(G)$.  To see this note that
multiplying (\ref{srg}) by $A$ and substituting (\ref{srg}) for $A^2$
gives an equation of the form
$$ A^3 =  rA + s(J-I)+tI$$
for some nonnegative integers $r,s,t$ depending on $n,k,a,c$.
Thus
$$G^{(3)} = rG + sK_n +tI_n.$$
By Theorem \ref{Dthm}, if $z=\Lmin(G)$, then
$$z^3 = \Lmin(G^{(3)})\ge rz - s + t.$$
Thus, $z$ is at least as large as the minimum root of the cubic
$x^3-rx+s-t$.  Two of the roots are $\theta$ and $\tau$, inherited from
(\ref{quadeqn}), and the other is necessarily $-(\theta + \tau)=c-a$
since the coefficient of $x^2$ is 0. Thus, $\Lmin(G)=z\ge
\min\{ {c-a, \tau}\}$. Therefore, taking $G^{(3)}$ in Theorem
\ref{Dthm}  gives $\Lmin(G)=\tau$ when $G$ is a strongly
regular graph such that $\tau\le c-a$. In particular,
$\Lmin(G)=\tau$ when $a \le
c$, a condition that must be satisfied by at least one of a
strongly regular graph and its complement.
}\end{example}

\begin{example}\label{dodec} {\rm ({\em The dodecahedral graph })
Let $G$ be the plane graph whose vertices and edges are those of
the dodecahedron.  Then $G$ is 3-regular and the 20 face 5-cycles
of $G$ constitute a decomposition $\D$ of $2G$ with precisely
3 cycles through each vertex.  Thus by Theorem
\ref{Dthm}, $\Lmin(G) \ge -3\Lmin(C_5)/2 = -3(1+\sqrt 5)/4 \approx -2.427$.
The exact value is $\Lmin(G) = -\sqrt 5 \approx -2.236$.
}\end{example}

If some weighted graph $H^j$ in a decomposition $\D$ is disconnected, it is clear
that replacing $H^j$ in $\D$ by its set of weighted components cannot weaken
the estimate in Theorem \ref{Dthm}.  Thus, there is no loss in restricting the weighted graphs
in a decomposition $\D$ to be connected.  In Example \ref{srgeg},
we could replace each loop graph $\I_n$ by its $n$ separate individual loops $\I_1=\J_1$.

By the {\em type} of a weighted graph, we mean its underlying unweighted graph.
Each choice of types for the weighted graphs in $\D$ in (\ref{Dineq}) leads to a lower bound on $\Lmin(G)$
by maximizing over all weighted graphs of that type and so, when applied to a simple graph $G$,
yields a new graph parameter. Of course, equality holds if all
types of weighted graph are allowed (take $\D$ to be $G$ itself).
The trick is to pick a family of graphs that are easy to deal with and that often yield good lower bounds in
(\ref{Dineq}) when maximized over all weightings.
The conditions for equality in Theorem \ref{Dthm} suggest
that the bound (\ref{Dineq}) might often
be best for decompositions $\D$ of a weighted graph $H(V,w)$ that employ weightings of connected graphs
whose minimum eigenvalues have large multiplicity and small absolute value.
Such are the simple complete graphs and looped complete graphs on subsets of $V$.
For when $n>1$, $K_n$ has least eigenvalue $-1$ with
multiplicity $n-1$, while $\J_n$
has least eigenvalue $0$ with multiplicity $n-1$.  (In both cases, the eigenvectors
associated with the minimum eigenvector are the nonzero vectors $\vx$ such that
$\sum_u x_u=0$.)   Because $K_1$ has no edges, it is never used in a decomposition.
The graph $\J_1$ will be called a {\em loop}. It may be used in a decomposition, noting carefully that
its least eigenvalue is $+1$.  We are therefore led to the following definition.

A {\em complete graph decomposition}  of a weighted graph $H(V,w)$ is a decomposition $\C=\{a_1C^1, \ldots a_mC^m\}$ of $H(V,w)$  consisting of scalar multiples of complete graphs, looped or simple. Because negative weights are allowed, cancellation may occur (as in Example \ref{ctemultipart}), so the graphs $C^j=K^j$ or $J^j$ in $\C$ need not be subgraphs of $H$. Taking $\D=\C$ in Theorem \ref{Dthm}, we obtain the following corollary.

\begin{corollary}\label{Ccorol}
Let $\C=\{a_1C^1, \ldots, a_mC^m\}$ be a complete graph decomposition of a weighted graph $H=H(V,w)$ and,
for each vertex $u\in V$, let $\Lmin(\C_u)$ equal the sum of the minimum eigenvalues of the
complete graphs in $\C$ that contain the vertex $u$.
Then
\begin{eqnarray}\label{Cineq}
\Lmin(H)\geq \min_u\Lmin(\C_u).
\end{eqnarray}
Let $\Lmin(\C)=\min_u\Lmin(\C_u)$.
Then equality holds in (\ref{Cineq})
if and only if there is a vector $\vx\ne\zeros$ of real numbers assigned to the vertices of $H$ such that
\begin{enumerate}\label{eqcond}
\item $x_u=0$ whenever $\Lmin(\C_u)>\Lmin(\C)$;
\item $\vx$ is constant on each vertex set $V_j$ for which $a_j<0$ and,
$\sum_{u\in V_j} x_u = 0$ for each vertex set $V_j$ of order greater than 1 for which $a_j>0$.
\end{enumerate}
\end{corollary}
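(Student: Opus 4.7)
My plan is to invoke Theorem \ref{Dthm} with $\D=\C$. The inequality (\ref{Cineq}) is literally (\ref{Dineq}) in this setting, and condition 1 of Theorem \ref{Dthm} becomes condition 1 of the corollary verbatim (with $\Lmin(\C_u)$ in place of $\Lmin(\D_u)$). All that remains is to show that condition 2 of Theorem \ref{Dthm} specializes to condition 2 of the corollary; since Theorem \ref{Dthm} gives an ``if and only if,'' the corresponding iff for $\C$ will follow automatically.

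Condition 2 of Theorem \ref{Dthm} demands, for each $j$, that the restriction $\vx_j$ be either $\zeros$ or a minimum-eigenvalue eigenvector of $a_j C^j$. The key step is therefore to compute the minimum eigenspace of each scaled complete graph appearing in $\C$. For $|V_j|>1$, the adjacency matrices $K^j$ and $J^j$ share the eigenspace decomposition $\mathrm{span}(\ones)\oplus\ones^{\perp}$, with the larger eigenvalue ($|V_j|-1$, respectively $|V_j|$) on $\mathrm{span}(\ones)$ and the smaller eigenvalue ($-1$, respectively $0$) on $\ones^{\perp}$. Scaling by $a_j>0$ preserves the ordering, so the minimum eigenspace remains $\ones^{\perp}$, i.e., $\sum_{u\in V_j} x_u=0$. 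Scaling by $a_j<0$ reverses the ordering, so the minimum eigenspace becomes $\mathrm{span}(\ones)$, i.e., $\vx_j$ is constant on $V_j$. The alternative $\vx_j=\zeros$ allowed by Theorem \ref{Dthm} is absorbed into either description, because the zero vector is both sum-to-zero and constant.

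For $|V_j|=1$ the only allowed complete graph in $\C$ is the single loop $J_1$, whose weighted adjacency matrix $[a_j]$ has $a_j$ as its sole eigenvalue; hence every scalar is automatically a minimum eigenvector and no constraint on $x_u$ arises. This explains why condition 2 of the corollary is imposed only on $V_j$ of order greater than 1. Terms with $a_j=0$ may be discarded since they contribute nothing to $H$ and their eigenvector constraint is vacuous. Assembling these cases produces precisely condition 2 of the corollary. There is no genuine obstacle beyond bookkeeping: the substantive content is packaged in Theorem \ref{Dthm}, and the corollary's role is to recast the abstract eigenvector condition into the transparent ``constant on $V_j$'' or ``sum-to-zero on $V_j$'' form, exploiting the fact that $K^j$ and $J^j$ have the same pair of eigenspaces.
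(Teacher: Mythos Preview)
Your proposal is correct and matches the paper's approach: the paper simply states that the corollary is obtained by ``taking $\D=\C$ in Theorem \ref{Dthm},'' and you have accurately supplied the eigenspace computations (span$(\ones)$ versus $\ones^{\perp}$ for $K^j$ and $J^j$, with the sign of $a_j$ determining which is minimal) that translate condition 2 of Theorem \ref{Dthm} into condition 2 of the corollary. Your handling of the edge cases $|V_j|=1$, $a_j=0$, and the absorption of the ``$\vx_j=\zeros$'' alternative is also correct.
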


\begin{example} \label{ctemultipart}  {\rm ({\em Complete multipartite graphs })
Let $G=K_{n_1,\ldots,n_m}$ be the complete multipartite graph with vertex parts $V_1, V_2, \ldots, V_m$ of orders
$n_1\ge n_2 \ge \cdots \ge n_m$.
Then
$$G=J_n -J_{n_1} - \cdots -J_{n_m}$$
where $\J_n$ is the looped complete graph of order $n=n_1+ \cdots + n_m$
and, for $1\leq j\leq m$,  $-\J_{n_j}$ is the negatively weighted looped complete graph on $V_j$.
Since $\Lmin(\J_n)=0$ and $\Lmin(-\J_{n_j})=-n_j$, we have
$\Lmin(G)\ge -n_1$ by Corollary \ref{Ccorol}.  Moreover, it is straightforward to check
that conditions 1 and 2 imply that equality holds if and only if $n_1=n_2$.
Of course, using the characteristic polynomial of $G$ \cite[p.74]{CDS}, it follows that $G$ has precisely $m-1$ negative
eigenvalues and that they interlace $-n_1, -n_2, \ldots, -n_m$.   Also, $G$ has one positive eigenvalue (in fact, this characterizes
the complete multipartite graphs \cite[p.163]{CDS}).  All
remaining eigenvalues are equal to 0.
}\end{example}

If the values of the weight function $w$ of a weighted graph $H(w)$ are nonnegative integers,
then $H(w)$ may be regarded as a {\em multigraph}
with $w(uv)$ {\em distinct unweighted} edges between each unordered
pair $uv$ of vertices of $H$.
We call $w(uv)$ the {\em multiplicity} of $uv$.
To emphasize this distinction,
we use the notation $\hat G=\hat G(w)=\hat G(V,w)$ for multigraphs
and continue to use $G$ for graphs (simple or looped) and $H$ for weighted graphs.

\begin{example} \label{linegrapheg}  {\rm ({\em Line graphs of multigraphs})
Let $\hat G=\hat G(w)$ be a  multigraph with maximum edge multiplicity $\mu=\max_{u\sim v}w(uv)$.
The {\em line graph} $L(\hat G)$ of $\hat G$ has the edges of $\hat G$ as vertices.
Two edge vertices  of $L(\hat G)$ are
adjacent if they have precisely one common end vertex in $\hat G$.  Thus edges in $\hat G$ with the same two endpoints
are nonadjacent as vertices in $L(\hat G)$.  Note that $L(\hat G)$ is a simple graph.  Also, if a loop at $u$ in $\hat G$ is replaced
by an edge with one end at $u$ and the other at an additional new vertex, then the line graph
is not changed.  Thus we may assume that $\hat G$ has no loops.

The line graph $L(\hat G)$ has a natural decomposition into complete multipartite graphs.  To see this,
for each vertex $u$ of $\hat G$, let $T(u)$ be the subgraph of $L(\hat G)$ induced by the {\em claw} at $u$,
that is by the edges incident to $u$ in $\hat G$.
The subgraph $T(u)$ of $L(G)$ is a complete multipartite graph and, because $\hat G$ has no loops,
the part sizes of $T(u)$ are equal to the multiplicities of the edges incident to $u$ in $\hat G$. Thus, by Example \ref{ctemultipart},
$\Lmin(T(u)) \ge -\mu$.
Because adjacent edge vertices of $L(\hat G)$ have precisely one vertex in common in $\hat G$, it follows that the graphs $T(u)$
decompose $L(\hat G)$.  Also since each edge of $\hat G$ has two distinct endpoints, each edge vertex of $L(\hat G)$ is in precisely two
graphs of the decomposition.  Thus, by Theorem \ref{Dthm}, if $\hat G=\hat G(w)$ is a loopless multigraph with maximum edge multiplicity $\mu$, then
\begin{equation}\label{linegraph}
\Lmin(L(\hat G)) \ge \min_{u\sim v} \Lmin(T(u))+\Lmin(T(v)) \ge -2\mu.
\end{equation}

Equality can be attained in \eqref{linegraph}.  To see this, note first that if $G$ is a simple graph and $M$ is the adjacency matrix
of $L(G)$, then the adjacency matrix of $L(\mu G)$ is the Hadamard product $M\otimes J_{\mu}$.  Thus, $\Lmin(L(\mu G))= \mu\Lmin(L(G)).$ Therefore, if $\hat G$ has maximum edge multiplicity $\mu$ and $\mu G$ is an induced subgraph of $\hat G$ with $\Lmin(L(G))=-2$
(see Example \ref{twig}), then $\Lmin(L(\hat G)) \le \Lmin(L(\mu G)) = -2\mu$ and so equality is attained in (\ref{linegraph}). We leave it as an problem to figure out if it is possible to characterize the multigraphs for which equality \eqref{linegraph} is attained.
}\end{example}

\begin{example} \label{twig} {\rm ({\em Twig replication and generalized line graphs })
There are interesting cases where the lower bound (\ref{linegraph}) can be improved.
Suppose that a loopless multigraph $\hat G=\hat G(w)$ is formed from a connected simple graph $G$
by optionally increasing the multiplicity of {\em twigs} of $G$, that is, of edges of $G$
(if any) that have an end vertex of degree 1.  Decompose $L(\hat G)$ by the complete multipartite graphs
$T(u), u\in V(\hat G)$ as in Example \ref{linegrapheg}.
If $u$ is a vertex of degree 1 in $G$, then $T(u)$ will have no edges in $L(\hat G)$ and may be omitted.
Now further decompose each subgraph $T(u)$ by graphs $\J$ and $-J$ of appropriate orders as in Example \ref{ctemultipart}.
Because of the construction,
the vertex parts of size 2 or more that occur in the graphs $T(u)$ will be vertex disjoint, and so, each
vertex of $L(\hat G)$ will be in at most one $-\J$ graph of order 2 or more.  Also, each edge vertex of $L(\hat G)$ will be in at most
two $-J_1$ graphs in the decomposition since it is in at most two $T(u)$'s. Thus, by Theorem \ref{Dthm}, if $\hat G=\hat G(w)$
is a multigraph formed from a simple graph $G$ by replicating twigs, then
\begin{equation}\label{genzdlinegraph}
\Lmin(L(\hat G)) \ge \min\{-2, -\mu\},
\end{equation}
where $\mu=\max_{uv}w(uv)$ is the maximum twig multiplicity in $\hat G(w)$.

When $\mu=1$, we have $G=\hat G$ and $\Lmin(L(G)) \ge -2$ where $L(G)$ is the
usual line graph of a simple graph $G$. In this case, each vertex $uv$ of $L(G)$ is in precisely two complete graphs, $T(u), T(v)$,
and the conditions for equality can be shown to imply a result of Doob (see, for example, \cite[p.29]{CRS}) which states that
$\Lmin(G)>-2$ if and only if each component of $G$ is either a tree or is odd-unicyclic.

When $\mu=2$, we again have $\Lmin(L(\hat G)) \ge -2$.  The graphs $L(\hat G)$ with $\mu=2$  are
the {\em generalized line graphs} of Hoffman \cite{Hoff} (see also \cite{CDS,CRS2} or \cite[p.6]{CRS}).  The usual proofs that $\Lmin(L(\hat G))\ge -2$ for
a generalized line graph $L(\hat G)$ employ modifications of the vertex-edge incidence matrix of $\hat G$ \cite{Hoff}, \cite[p.6]{CRS}.
}\end{example}

Let $\LminC(H)$ be the best possible estimate of $\Lmin(H)$ that can be obtained
in (\ref{Cineq}); that is, let
$$\LminC(H) =\sup_\C\Lmin(\C),$$
where the supremum is taken over all complete graph decompositions $\C$
of $H=H(V,w)$.  To see that the supremum is attained,
we show that $\LminC(H)$ is the optimal value of a linear programming problem.

Let  $M$ be the
incidence matrix with rows indexed by all of the (unordered) vertex pairs $uv$
and columns indexed by all complete graphs, looped and simple, with vertex sets contained in $V$.
(Note that because cancellation may occur,
all complete graphs must be taken, whether or not they are subgraphs of $H(V,w)$.)
Let $\vw$ be the weight vector determined by the given weight function $w$ on $G$;
that is, $\vw_{uv}=w(uv)$ for each unordered vertex pair $uv$.
Then a vector $\vz$ indexed by the complete graphs specifies a complete graph decomposition $\C$ of $H(V,w)$ with weights $\vz$
if and only if $M\vz=\vw$.  Now let $N$ be incidence matrix with rows indexed by the vertices and columns by the complete graphs and let $L$
be the diagonal matrix with diagonal entries equal to the minimum eigenvalues of
all of the complete graphs with vertex sets contained in $V$. Then $\Lmin(\C)$ is the smallest number $\L$
such that $NL\vz \ge \L \ones$.  Thus $\LminC(H(V,w))$ is the optimal value of the following
linear programming problem in the variables $\vz$, $\L$.
\begin{eqnarray}
\mbox{Minimize} \  & \L & \notag \\
\mbox{  Subject to} &   & \label{LP}\\
M\vz & = & \vw \notag\\
NL\vz & \ge & \L\ones \notag.
\end{eqnarray}
Thus, $\LminC(H(V,w))$ is  attained for some complete graph decomposition
$\C$ of $H(V,w)$.   Moreover, if $w$ is rational valued, then the optimal value
$\L^*$ is rational and an optimal vector $\vz^*$ giving equality in (\ref{LP}) may be chosen
to have rational entries.  Consequently,
there is a positive integer $\mu$ such that $\mu\L^*$ is an integer and $\mu\vz^*$
has integer entries.  When $w$ is rational valued,
this observation allows us to work with decompositions $\D$ consisting of integer multiples of complete graphs, looped or simple,
as long as multiples $\mu H$ of the weighted graph $H$ are employed.

Thus, for a simple graph $G$,  the graph parameter $\LminC(G)$ has the following equivalent definition:
$$\LminC(G)=\min_{\mu,\,\C} \Lmin(\C)/\mu$$
where the minimum is taken over all positive integers  $\mu$ and all decompositions $\C$
of $\mu G$ by integer multiples (positive or negative) of complete graphs $C^j=K^j$ or $J^j$.

It is perhaps impossible to classify the simple graphs $G$ for which the parameter $\LminC(G)$
equals the least eigenvalue $\Lmin(G),$ but there are a few simple observations that limit the
graphs for which equality holds.
Because the characteristic polynomial of a graph $G$ (or multigraph  $\hat G$) is monic with integer coefficients, every
rational root is an integer. Therefore, if the rational number $\LminC(G)$  is
not an integer, then $\Lmin(G)> \LminC(G)$.  Also, if $\Lmin(G)$ happens to be irrational
(as for example, for the 5-cycle), then $\Lmin(G)>\LminC(G)$.

\section{Smallest eigenvalue and clique partitions}\label{sec:clique}

In this section, we further restrict the type of decompositions $\C$ in (\ref{Ccorol}) to a special type that
often appear in the literature, {\em clique partitions}.

A {\em clique} in a multigraph $\hat G$ is a simple complete subgraph.
A clique partition of a (necessarily loopless) multigraph $\hat G$ is a collection $\K=\{ K^1, \ldots, K^m \}$
of cliques of $\hat G$ whose edge-sets
partition the edge-set of $\hat G$.  Here we do not weight the cliques, but may take the same clique more than once.
Consequently, because all of the cliques in a clique partition have least eigenvalue $-1$,
for each vertex $u$ of $\hat G$,  we have the convenient expressions
\begin{equation} \label{rdefn}
\Lmin(\K_u)=-r_u(\K) \mbox{ and } \Lmin(\K)=-r(\K)
\end{equation}
where $r_u=r_u(\K)$ is the number of cliques in $\K$ that contain the vertex $u$ and $r(\K)=\max_u r_u$.  Thus,
\begin{equation}
\Lmin(\hat G)\ge -r(\K)
\end{equation}
We are mainly interested in graphs $G$ that are simple.
Because we are now only allowing copies of cliques in our partitions,  taking scalar multiples
$\hat G=\mu G$ can sometimes improve our bound
on $\Lmin(G)$.  We have the following corollary to Theorem \ref{Dthm}.

\begin{corollary}\label{Kcorol}
Let $\K$ be a clique partition of a multiple $\mu G$ of a simple graph $G$.
Then
\begin{eqnarray}\label{Kineq}
\Lmin(G)\geq -\frac{r(\K)}{\mu}
\end{eqnarray}
with equality
if and only if there is a vector $\vx\ne\zeros$ of real numbers assigned to the vertices of $G$ such that
\begin{eqnarray}\label{Kcond}
x_u=0 \mbox{ whenever } r_u(\K)<r(\K); \mbox{ and,}
\sum_{u\in K} x_u = 0 \mbox{ for each clique } K\in\K.
\end{eqnarray}
\end{corollary}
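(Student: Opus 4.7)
The plan is to specialize Corollary \ref{Ccorol} to the setting where each weighted graph in the decomposition is an unweighted simple clique of the multigraph $\mu G$. A clique partition $\K = \{K^1, \ldots, K^m\}$ of $\mu G$ is precisely a complete graph decomposition $\C$ in which every coefficient $a_j$ equals $1$ and every $C^j$ is a simple complete graph $K^j$ of order at least two.

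Since each $K^j$ is a simple complete graph on at least two vertices, $\Lmin(K^j) = -1$, and therefore $\Lmin(\C_u) = \sum_{u \in V_j} \Lmin(K^j) = -r_u(\K)$ and $\Lmin(\C) = -r(\K)$, as already recorded in (\ref{rdefn}). Applying Corollary \ref{Ccorol} to $\mu G$ yields $\Lmin(\mu G) \geq -r(\K)$. Because $A(\mu G) = \mu A(G)$ and $\mu > 0$, we have $\Lmin(\mu G) = \mu \Lmin(G)$, and dividing by $\mu$ gives the inequality (\ref{Kineq}).

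For the equality characterization, I would transcribe the two conditions of Corollary \ref{Ccorol}. The first becomes $x_u = 0$ whenever $-r_u(\K) > -r(\K)$, equivalently whenever $r_u(\K) < r(\K)$. The second has a clause about parts with $a_j < 0$, which is vacuous here, and a clause requiring $\sum_{u \in V_j} x_u = 0$ whenever $a_j > 0$ and $|V_j| > 1$; since every clique in $\K$ contributes with $a_j = 1$ and has at least two vertices, this forces $\sum_{u \in K} x_u = 0$ for every $K \in \K$. Together these reproduce (\ref{Kcond}).

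The argument is entirely a mechanical specialization, so there is no substantive obstacle — only the housekeeping of checking the scaling identity $\Lmin(\mu G) = \mu \Lmin(G)$ and of confirming that the ``negatively weighted'' and ``singleton clique'' clauses of Corollary \ref{Ccorol} are both vacuous in the present unweighted, loopless setting.
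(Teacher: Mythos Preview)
Your proposal is correct and matches the paper's intent: the paper simply announces Corollary~\ref{Kcorol} as an immediate consequence of the earlier decomposition result, and your argument is exactly the routine specialization one would write out (via Corollary~\ref{Ccorol}, itself a specialization of Theorem~\ref{Dthm}). The bookkeeping you flag---$\Lmin(\mu G)=\mu\Lmin(G)$, the vacuity of the $a_j<0$ clause, and the fact that every clique in $\K$ has order at least two---is handled correctly.
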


The conditions (\ref{Kcond}) for equality in Corollary \ref{Kcorol} may be restated in a convenient matrix form.
If $\K=\{ K^1, \ldots, K^m \}$ is a clique partition of a multiple $\mu G$ of a simple graph $G$, let
$N=N(\K)$ be the $n\x m$ vertex-clique incidence matrix of $\K$ with rows indexed by the vertices of $G$ and columns by the
cliques in $\K$.  Thus, the $(u,K^j)$-entry of $N$ is $1$ if $u\in K^j$ and is zero otherwise.  Then equality holds in (\ref{Kineq})
if and only if there is a vector $\vx\neq\zeros$ indexed by the vertices of $G$ such that $N^T\vx=\zeros$ and $x_u=0$ whenever
$r_u(\K) < r(\K)$.

\begin{example}{\rm ({\em Line graphs}) It is an immediate consequence of Corollary \ref{Kcorol} that
if a simple graph $G$ can be edge-partitioned by simple cliques so that each vertex is in at most two
of the cliques, then $\Lmin(G)\ge -2$.  But we have already encountered these graphs in Example \ref{linegrapheg}:
by a result of J. Krausz
\cite{Kra}, they are precisely the line graphs of simple graphs.
}\end{example}

\begin{example}{\rm ({\em Partial geometries})
A partial geometry $pg(K,R,T)$ is an incidence structure of points and lines such that any two points are incident with at most one line,
every line has $K$ points, every point is on $R$ lines, and for any line $L$ and any point $p\notin L$, there are exactly $T$ lines through $p$ that intersect $L$. Partial geometries were introduced by Bose \cite{Bose} along with strongly regular graphs. The point graph of a partial geometry $pg(K,R,T)$ is the graph whose vertices are the points of the geometry where two vertices/points are adjacent if there is a line that contains them. It is known (see \cite[Problem 21H]{vLW}) that the point graph of a partial geometry $pg(K,R,T)$ is a strongly regular graph with smallest eigenvalue $-R$. Note that the edge set of this graph can be partitioned into cliques (corresponding to the lines of the geometry) such that each vertex is contained in exactly $R$ cliques. Corollary \ref{Kcorol} with $\mu=1$ implies that the smallest eigenvalue of this graph is at least $-R$ which is tight. The point graphs of partial geometries also appear in \cite{CX} where it is proved that certain random walks on them mix faster than the non-backtracking walks considered in \cite{ABLS}.
}
\end{example}

\begin{example}{\rm  ({\em The Johnson graphs}) Let $v,k$ be positive integers with
$v\ge 2k$.   The Johnson graph $J(v,k)$ has
the $k$-subsets of a $v$-set $X$ as vertices with $S,T\subset X$
adjacent if $|S\cap T|=k-1$.   If $C$ is a $(k-1)$-subset of $X$, then the
set $K(C)$ of all $k$-subsets of $X$ that contain $C$ is the vertex set of a clique in $J(v,k)$.
Each pair $S,T$ of adjacent vertices is in precisely
one such clique, the clique $K(S\cap T)$.  Thus, the family $\K=\{ K(C) : |C|=k-1, C\subset X\}$
is a clique partition of $J(v,k)$.
Also, $r_S(\K)=k$ for each $S\in V$. By Corollary \ref{Kcorol}, $\Lmin(J(V,k))\ge -k$.  Moreover, a nonzero vector $\vx=(x_S)_S$ satisfies the conditions for equality if and only if $\sum_{S\in K(C)} x_S =0$ for each clique $K(C)$, $|C|=k-1$.
This is a system of $\binom{v}{k-1}$ homogeneous linear equations in $\binom{v}{k}$ variables and so has
a nontrivial solution. Thus $\Lmin(J(v,k))=-k$ (with multiplicity $\binom{v}{k}-\binom{v}{k-1}$, since the
constraints can be shown to be linearly independent).  There are explicit formulas for all of the eigenvalues
and multiplicities of the relation graphs of the Johnson schemes and, in particular, for the Johnson graphs \cite[p.413]{vLW}.
}\end{example}

Corollary \ref{Kcorol}  leads us to a graph parameter based on clique partitions.
For a simple graph $G$, let
$\LminK(G)$ be the best possible estimate of $\Lmin(G)$
that can be obtained
using clique partitions of scalar multiples of $G$; that is, let
\begin{equation}\label{rstardefn}
\LminK(G)= -\max_{\mu,\, \K} r(\K)/\mu
\end{equation}
where the maximum is taken over all positive integers $\mu$ and all clique partitions $\K$
of $\hat G=\mu G$.   Then
\begin{equation} \label{rstarineq}
\Lmin(G)\ge \LminC(G) \ge \LminK(G).
\end{equation}
As in (\ref{LP}) with $\LminC(G)$, a linear programming problem shows that $\LminK(G)$ is attained by some $\mu, \K$ and is rational.
\begin{remark}\label{bipartite} {\rm
As with the equality $\Lmin(G)=\LminC(G)$, it may be impossible to classify the simple graphs $G$ for which $\Lmin(G)= \LminK(G)$,
but there are conditions that restrict the possible simple graphs. Again, because rational roots of monic polynomials are integers,
$\LminK(G)$ must be an integer if $\Lmin(G)=\LminK(G)$. We also note that we may as well restrict our attention
to simple graphs $G$ that contain triangles, $K_3$.  For if $G$ is $K_3$ free, then the only cliques in $G$
are edges and it follows that $\LminK(G)=-\Delta(G)$, the maximum vertex degree in $G$.
Thus, if $G$ is connected and triangle free,  $\Lmin(G)=\LminK(G)$ if and only if $G$ is a regular bipartite graph.
(This can be seen by standard results, or from Remark \ref{Kedges} below.)
}\end{remark}

\medskip
Another limitation on the equality $\Lmin(G)=\LminK(G)$  follows by noting that
conditions (\ref{Kcond}) for equality in Corollary \ref{Kcorol} can sometimes be extended
if $r_u(\K)<r(\K)$ for some vertex $u$.
Let $V^1=\{ u\in V  :  r_u(\K)=r(\K) \}$
and let $\vx$ be a vector satisfying conditions (\ref{Kcond}).
If $V^1=V$, stop. If $V^1\neq V$, then $x_u=0$ for all $u\in V\backslash V^1$. There may now be a clique $K\in \K$ that
meets $V^1$ in only one vertex $v$, say. Then $x_v=0$ since $x_u=0$ for all $v\in V(K)\backslash \{v\}$
and $\sum_{u\in V(K)} x_u =0$.  Let $V^2$ be the set of vertices obtained by deleting all vertices $v\in V^1$
for which there is a clique $K\in \K$ that meets $V^1$ only in $v$.  Then $x_u=0$ for all $u\in V\backslash V^2$.
Repeat this last step.  That is, given $V^i$, let
$$V^{i+1}=V^i\backslash \{v  :  V(K)\cap V^i =\{ v \} \mbox{ for some } K\in \K\}.$$
Eventually, we obtain a set $V^*$ (possibly empty) such that each clique in $\K$ is either disjoint from $V^*$
or else meets $V^*$ in two or more vertices.  We call the vertices in $V^*$ the $\K$-{\em essential vertices}.
Note that if $\vx$ satisfies conditions (\ref{Kcond}), then $x_u=0$ for all $u\in V\backslash V^*$.
Also, because each clique in $\K$ is either disjoint from $V^*$
or meets $V^*$ in two or more vertices, if $V^*\neq \emptyset$, we must have $r_u(\K^*)=r_u(\K)=r(\K)$ for each vertex $u\in V^*$.
Thus,
\begin{equation}\label{starequals}
r_u(\K^*)=r(K^*) \mbox{ for all } u\in V^*, \mbox{ and so } r(\K^*)=r(\K).
\end{equation}
We now have the following result.

\begin{lemm}\label{essential}  Let $G$ be a simple graph with vertex set $V$.
Let $\K$ be a clique partition of $\mu G$ and let $V^*$ be the set of $\K$-essential vertices.
Then $$\Lmin(G)=-r(\K)/\mu \mbox{ if and only if } V^*\neq\emptyset \mbox{ and } \Lmin(G^*)=-r(K^*)/\mu$$
where $G^*=G[V^*]$ and $K^*$ is the set of nonempty restrictions of cliques in $\K$ to $V^*$.
Moreover, if $\Lmin(G)=-r(\K)/\mu$, then $\Lmin(G)=\Lmin(G^*)$ and $r_u(\K^*)=r(\K)$ for all $u\in V^*$.
\end{lemm}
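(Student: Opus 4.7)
The plan is to prove each direction of the biconditional by applying Corollary \ref{Kcorol} twice---once to $(G,\K)$ and once to $(G^*,\K^*)$---and transferring the equality-certifying vector between the two problems. The key facts already established in the discussion preceding the lemma are: (i) any vector $\vx$ satisfying the equality conditions (\ref{Kcond}) for $(G,\K)$ must vanish outside $V^*$, by the iterative definition of the $\K$-essential vertices; (ii) by (\ref{starequals}), if $V^*\ne\emptyset$ then $r_u(\K^*)=r(\K)$ for every $u\in V^*$, and in particular $r(\K^*)=r(\K)$; and (iii) every clique $K\in\K$ meets $V^*$ either in the empty set or in at least two vertices, which is the stopping condition of the construction of $V^*$. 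With these in hand the proof reduces to routine bookkeeping.

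For the forward direction I would assume $\Lmin(G)=-r(\K)/\mu$ and invoke Corollary \ref{Kcorol} to produce a nonzero $\vx$ satisfying (\ref{Kcond}). By (i), $x_u=0$ for $u\notin V^*$; since $\vx\ne\zeros$, this forces $V^*\ne\emptyset$. Let $\vx^*=\vx|_{V^*}$, which is nonzero. By (ii) the first half of (\ref{Kcond}) is vacuous for $(G^*,\K^*)$, and by (iii) each sum condition $\sum_{u\in K}x_u=0$ reduces to the corresponding condition $\sum_{u\in K\cap V^*}x^*_u=0$ for the restriction $K\cap V^*\in\K^*$. Hence $\vx^*$ certifies equality in Corollary \ref{Kcorol} for $(G^*,\K^*)$, giving $\Lmin(G^*)=-r(\K^*)/\mu=-r(\K)/\mu$.

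For the backward direction I would assume $V^*\ne\emptyset$ and $\Lmin(G^*)=-r(\K^*)/\mu$, which equals $-r(\K)/\mu$ by (ii). Corollary \ref{Kcorol} applied to $(G^*,\K^*)$ yields a nonzero $\vx^*$ on $V^*$ satisfying the equality conditions, which I would extend to a vector $\vx$ on $V$ by declaring $x_u=0$ for $u\in V\setminus V^*$. I then verify (\ref{Kcond}) for $(G,\K)$: any $u$ with $r_u(\K)<r(\K)$ must lie outside $V^*$, since for $u\in V^*$ property (iii) forces $r_u(\K)=r_u(\K^*)=r(\K)$, so $x_u=0$ in this case; and for any $K\in\K$, the sum $\sum_{u\in K}x_u$ either vanishes trivially (when $K\cap V^*=\emptyset$) or equals $\sum_{u\in K\cap V^*}x^*_u=0$ because $K\cap V^*\in\K^*$ by (iii). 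Corollary \ref{Kcorol} then delivers $\Lmin(G)=-r(\K)/\mu$. The ``moreover'' clause is then immediate: the two directions together give $\Lmin(G)=\Lmin(G^*)=-r(\K)/\mu$, and (ii) records $r_u(\K^*)=r(\K)$ for all $u\in V^*$.

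The only place where some care is needed is confirming that $\K^*$ really is a clique partition of $\mu G^*$, so that Corollary \ref{Kcorol} is applicable to $(G^*,\K^*)$; but this is immediate from (iii), since every edge of $\mu G^*$ has both endpoints in $V^*$ and hence lies in a unique clique $K\in\K$ whose restriction $K\cap V^*$ has at least two vertices and therefore belongs to $\K^*$. Apart from this small check and the notational bookkeeping of pushing equality conditions back and forth through the restriction to $V^*$, the proof has no real obstacle and is essentially a direct consequence of Corollary \ref{Kcorol} combined with the properties of $V^*$ established in (\ref{starequals}).
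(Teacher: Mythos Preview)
Your proof is correct. The forward direction is essentially identical to the paper's. In the backward direction you take a slightly different route: you extend the equality-certifying vector $\vx^*$ for $(G^*,\K^*)$ by zeros to all of $V$ and verify that the extended vector satisfies conditions (\ref{Kcond}) for $(G,\K)$, then invoke Corollary \ref{Kcorol} again. The paper instead argues more briefly via interlacing: from $\Lmin(G)\ge -r(\K)/\mu = -r(\K^*)/\mu = \Lmin(G^*)$ together with $\Lmin(G)\le \Lmin(G^*)$ (since $G^*$ is an induced subgraph of $G$), one gets $\Lmin(G)=\Lmin(G^*)=-r(\K)/\mu$ directly. Your approach has the virtue of being symmetric with the forward direction and of not appealing to Cauchy interlacing, while the paper's is a line shorter; both are entirely natural.
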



\begin{proof}
Suppose that $\Lmin(G)=-r(\K)/\mu$. Then there is a vector $\vx\ne \zeros$ satisfying conditions (\ref{Kcond}) for equality in
Corollary \ref{Kcorol}.  Thus $V^*$ is nonempty, otherwise the observations above
imply that $x_u=0$ for all $u\in V\backslash V^*=V$, a contradiction.
Let $\vx^*$ be the restriction of $\vx$ to $V^*$. Because $\vx\ne\zeros$ and $x_u=0$ for $u\in V\backslash V^*$, we have $\vx^*\ne \zeros$
and $\sum_{u\in V(K^*)} x^*_u = 0$ for each restricted clique $K^*=K[V^*]$. Also, by \ref{starequals},
$r_u(\K^*)=r(\K^*)=r(\K)$ for $u\in V^*$.
Thus,  $\vx^*$ is a nonzero vector that satisfies the conditions (\ref{Kcond}) for equality for the clique partition $\K^*$ of $\mu G^*$.
Therefore, $\Lmin(G^*)=-r(\K^*)/\mu$. Also, $\Lmin(G)=\Lmin(G^*)$ since $r(\K^*)=r(\K)$.

Suppose now that $\Lmin(G^*)=-r(\K^*)/\mu$. By Corollary \ref{Kcorol} and (\ref{starequals}),  $\Lmin(G)\ge -r(\K)/\mu=-r(\K^*)/\mu.$
Thus, $\Lmin(G) \ge \Lmin(G^*)$.  The reverse inequality holds since $G^*$ is an induced subgraph of $G$.  Thus, $\Lmin(G) = \Lmin(G^*)$.
\end{proof}


\begin{remark}\label{Kedges}{\rm
For Lemma \ref{essential} to hold, it is necessary that the clique partition $\K^*$ of $\mu G^*$ be obtained by the restrictions of the cliques in $\K$.  In particular, if there is a clique partition $\tilde \K$ of $\mu G^*$ with $r(\tilde \K) < r(\K)$, then $\Lmin(G^*)=-r(K^*)/\mu$ and so $\Lmin(G)>-r(\K)/\mu$.

In the special case that each clique in $\K^*$ is a single edge (in particular, if $G^*$ is bipartite), it follows from Lemma
\ref{essential} and the conditions (\ref{Kcond}) that $\Lmin(G)=-r(\K)/\mu$ if and only if
$G^*$ is regular and some component is bipartite.  The key observation needed here is that if $x^*_u + x^*_v=0$
for each edge $uv$ in $G^*$, then the set of vertices $\{ u\in V^* : x_u\ne 0 \}$ is the vertex set of a union of
connected components of $G^*$ and the two subsets $\{ u\in V^* : x_u > 0 \}$, $\{ u\in V^* : x_u< 0 \}$ are a bipartition.
}\end{remark}

\begin{example}{\rm
Let $G^0$ be a simple $k$-regular bipartite graph and let $U$ be a set of vertices disjoint from $V^0=V(G^0)$.
Let $G$ be a simple graph obtained from $G^0$ by replacing some (or all) of the edges $uv$
of $G^0$ by cliques in $U\cup V^0$ that meet $G^0$ in the vertices $u,v$ only so that:
\begin{enumerate}
\item  Each vertex of $U$ is in at most $k-1$ of the cliques.
\item  Each pair of distinct vertices in $U$ is in at most one clique.
\end{enumerate}

\noindent
Let $\K$ be the cliques that replaced the edges together with the edges of $G^0$ that
were not replaced.  Then $G^*=G^0$, $K^*$ is the edge set of $G^*$ and $\Lmin(G)= -r(\K) = -k$ since $\Lmin(G^*)=-r(\K^*) = -k$.
}\end{example}

In Lemma \ref{essential}, we observed that if the set $V^*$ of $\K$-essential vertices is nonempty
and $\K^*$ is the set of nonempty restrictions
of cliques in $\K$ to $V^*$, then $r_u(\K^*)=r(\K)$ for each
vertex $u\in V^*$.
Therefore, when searching for simple graphs $G$ for which $\Lmin(G)=\LminK(G)$,
we may focus our attention on simple graphs $G^*$ such that some multiple $\mu G^*$
has a clique partition $\K^*$ for which
$r_u(\K^*)$ is constant and $\Lmin(G^*)=-r(\K^*)/\mu$.

\begin{lemm}\label{deltbnd}
Let $G$ be a simple graph with maximum vertex degree $\Delta$.
If $\K$ is a clique partition of $\mu G$
and $c$ is the smallest of the orders of the cliques in $\K$,
then
\begin{equation*}
\frac{r(\K)}{\mu}\leq \frac{\Delta}{c-1},
\end{equation*}
where equality holds if and only if
for some vertex $u$ of maximum degree in $G$,
each clique in $\K$ containing $u$ has order $c$.
\end{lemm}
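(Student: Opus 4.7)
The plan is a straightforward edge-counting argument at a single vertex, using the fact that a clique partition of $\mu G$ resolves the $\mu$-fold edge set at each vertex into contributions from the cliques meeting that vertex.

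First, I would fix an arbitrary vertex $u$ of $G$ and list the cliques of $\K$ containing $u$ as $K^{j_1}, \ldots, K^{j_{r_u}}$, with orders $k_1, \ldots, k_{r_u}$. Since $\K$ partitions the edge set of $\mu G$ and each clique $K^{j_i}$ contributes exactly $k_i - 1$ edges at $u$, I would record the identity
\begin{equation*}
\sum_{i=1}^{r_u} (k_i - 1) \;=\; \mu \, d_G(u).
\end{equation*}
Using $k_i \ge c$ for every clique in $\K$ and $d_G(u) \le \Delta$, this gives $r_u (c-1) \le \mu \Delta$, and hence $r_u/\mu \le \Delta/(c-1)$. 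Taking the maximum over $u \in V(G)$ then yields the desired inequality $r(\K)/\mu \le \Delta/(c-1)$.

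For the equality characterization, I would analyze when the two inequalities above can simultaneously be tight. If $r(\K)/\mu = \Delta/(c-1)$, then for any vertex $u$ achieving $r_u = r(\K)$ we must have $r_u(c-1) = \mu \Delta$; combined with $\sum_i (k_i - 1) = \mu d_G(u)$ and $k_i \ge c$, this forces both $d_G(u) = \Delta$ and $k_i = c$ for every clique of $\K$ containing $u$. Conversely, if $u$ is a vertex of maximum degree in $G$ and every clique of $\K$ at $u$ has order exactly $c$, then the identity above gives $r_u(c-1) = \mu\Delta$, so $r_u = \mu\Delta/(c-1)$; together with the already-established upper bound, this forces $r(\K) = r_u = \mu\Delta/(c-1)$.

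There is no real obstacle: the argument is pure double counting of the edges of $\mu G$ incident to a vertex, and the equality case is just tracking where the slack $k_i \ge c$ and $d_G(u) \le \Delta$ is consumed. The only thing to be a bit careful about is that the cliques are indexed by the partition $\K$ rather than by distinct subgraphs of $G$ (the same subset of vertices could occur as two different members of $\K$ when $\mu \ge 2$), but this does not affect the counting since each member of $\K$ still contributes $k_i - 1$ edges at $u$ in $\mu G$.
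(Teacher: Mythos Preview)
Your proposal is correct and follows essentially the same approach as the paper: an edge-counting argument at a vertex, using that each clique through $u$ covers at least $c-1$ of the $\mu d_u$ edges at $u$, and then tracking when the inequalities $k_i\ge c$ and $d_u\le\Delta$ are tight. The paper's proof is simply a more compressed version of what you wrote.
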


\begin{proof} Let $d_u$ denote the degree of $u$ in $G$.
There are $r_u(\K)$ cliques of $\K$ containing vertex $u$.
Since each of these cliques cover at least $c-1$ of the
$\mu d_u$ edges incident to $u$ in $\mu G$,
$r_u(\K) (c-1) \le \mu d_u \le \Delta \mu.$  Thus
$$\frac{r(\K)}{\mu}\le \frac{\Delta}{c-1}$$
with equality if and only if the stated condition holds.
\end{proof}

The estimate on $r_u(\K)$ in the proof of Lemma \ref{deltbnd} can be improved
if the number of cliques of smallest order at $u$ is known.

\begin{lemm}\label{smallcliques}
Let $G$ be a simple graph and let $\K$ be a clique partition of $\mu G$. Let $c$ be the
smallest of the orders of the cliques in $\K$. For each vertex $u$,
denote by $d_u$ its degree in $G$,  and by $e_u$ the number of
cliques of order $c$ in $\K$ that contain $u$. Then
\begin{equation}
r_u(\K)\leq \frac{\mu d_u + e_u}{c}
\end{equation}
with equality if and only if each clique in $\K_u$ has order $c$
or $c+1$.
\end{lemm}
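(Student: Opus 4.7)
The plan is to refine the edge-counting argument used in the proof of Lemma \ref{deltbnd} by tracking the clique orders more carefully. Let $K^1,\dots,K^{r_u}$ be the cliques in $\K_u$ (where $r_u=r_u(\K)$) and let $k_i$ denote the order of $K^i$. Since these cliques edge-partition the multi-edges incident to $u$ in $\mu G$, I get the key identity
\begin{equation*}
\sum_{i=1}^{r_u}(k_i-1)=\mu d_u,\qquad\text{equivalently,}\qquad \sum_{i=1}^{r_u}k_i=\mu d_u+r_u.
\end{equation*}

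Next I would split the sum on the left according to whether $k_i=c$ (contributing $e_u$ terms, each equal to $c$) or $k_i\ge c+1$ (contributing $r_u-e_u$ terms, each at least $c+1$). This gives
\begin{equation*}
\mu d_u+r_u \;=\; \sum_{i=1}^{r_u}k_i \;\ge\; e_u\cdot c + (r_u-e_u)(c+1) \;=\; c\,r_u + (r_u-e_u).
\end{equation*}
Cancelling $r_u$ from both sides yields $\mu d_u + e_u \ge c\, r_u$, which is exactly the claimed inequality.

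For the equality condition, I would note that the only inequality used is $k_i\ge c+1$ for those $i$ with $k_i>c$. Equality in that step holds if and only if $k_i=c+1$ whenever $k_i>c$, i.e., each clique in $\K_u$ has order $c$ or $c+1$. There is no real obstacle here; the only point worth stating carefully is the edge-partition identity $\sum_i(k_i-1)=\mu d_u$, which follows from $\K$ being a clique partition of $\mu G$ (each of the $\mu d_u$ edges at $u$ lies in exactly one clique of $\K_u$, and $K^i$ contributes $k_i-1$ such edges).
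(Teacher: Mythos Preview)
Your proof is correct and is essentially identical to the paper's argument: both count the $\mu d_u$ edges at $u$ covered by the cliques in $\K_u$, separating the $e_u$ cliques of order $c$ from the $r_u-e_u$ cliques of order at least $c+1$, and both identify the equality case in the same way. The only cosmetic difference is that you work with $\sum_i k_i=\mu d_u+r_u$ whereas the paper works directly with $\sum_i(k_i-1)=\mu d_u$.
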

\begin{proof}
Of the $\mu d_u$ edges incident to $u$ in $\mu G$,
the $e_u$ cliques of order $c$ cover $e_u(c-1)$ of the edges,
while the remaining $r_u-e_u$ cliques cover at least $(r_u-e_u)c$
of the edges.  Thus $\mu d_u \ge e_u(c-1)+(r_u-e_u)c$ or
$r_u(\K)\leq (\mu d_u + e_u)/{c}$ with equality if and only if
each clique in $\K_u$ of order greater than $c$ has order $c+1$.
\end{proof}

The {\em direct product} (or simply, {\em product}) of two graphs $G^1$ and $G^2$, denoted $G^1\x G^2$, has
vertex set $V(G^1)\x V(G^2)$ with vertices $(u,v)$  and $(u',v')$
adjacent if $u$ and $u'$  are adjacent in $G^1$ and $v$ and
$v'$ are adjacent in $G^2$.  If either $G^1$ or $G^2$ is simple, then $G^1\x G^2$ is simple.
The adjacency matrix of $G^1\x G^2$ is
$A(G^1)\otimes A(G^2)$, and the eigenvalues are $\lambda_i(G^1)\lambda_j(G^2)$,
where $1\leq i \leq |V(G^1)|$ and $1\leq j\leq |V(G^2)|$.

The next example illustrates how taking clique partitions of a multiple of a simple graph $G$
can sometimes give a better bound on $\Lmin(G)$ than clique partitions of $G$ alone.

\begin{example}\label{product} {\rm ({\em Direct products of simple complete graphs})
Let $\K$ be the set of all simple cliques of order $m$ in $K_m\x
K_n, m\leq n$.  Then $\K$ is a clique partition of
$\mu(K_m\x K_n)$ where $\mu$ is the number of simple cliques of $\K$ containing an edge of $K_m\x K_n$.  Thus $\Lmin(K_m\x
K_n)\geq-\frac{\Delta}{c-1}=-\frac{(m-1)(n-1)}{n-1}=-(n-1)$.  But
$\Lmin(K_m\x K_n)=\min\{-(m-1),-(n-1)\}=-(n-1)$, so equality is
attained in Lemma \ref{deltbnd}. That equality is attained can also be seen later in
example \ref{composition}.

It was necessary to take a multiple of $K_m\x K_n$ in this example in order
to be able to have an edge-partition by cliques of order $m$. For,
it can be shown that results of Pullman et al. \cite{PSW}
imply that $K_m\x K_n$ can be edge-partitioned by cliques of order $m$ if
and only if there exists an $m\x n^2$ orthogonal array with $n$
constant columns.
}\end{example}

The result in Example \ref{product} can be extended to obtain $\Lmin(G^1\x G^2)$ in some cases.
Let $G^i$ be a $k_i$-regular graph for $i\in \{1,2\}$.  Suppose that
$\Lmin(G^i)=\LminK(G^i)$.  Furthermore, let $\K_i$ be
a clique partition of $\mu_i$ into cliques of order $c_i$ such that
$\LminK(G^i)=-r(\K_i)/\mu_i$. Then
$$\Lmin(G^i)=\LminK(G^i)=-\frac{k_i}{c_i-1}.$$
Let $G=G^1\x G^2$.  The set of all subgraphs $\kappa_1\x
\kappa_2$ of $G$, where $\kappa_i\in \K_i$, partitions the edge
set of $\mu_1\mu_2 G$.  Each of these subgraphs is isomorphic to
$K_{c_1}\x K_{c_2}$.  Suppose $c_1\leq c_2$.  If $\mu$ is the number
of $c_1$-cliques containing an edge in $\kappa_1\x \kappa_2$, then
the set of all such cliques, over all subgraphs $\kappa_1\x\kappa_2$,
partitions the edge set of $\mu\mu_1\mu_2 G$. Therefore
$$\Lmin(G)\geq \frac{-\Delta(G)}{c_1-1}=-\frac{k_1k_2}{c_1-1}$$
where the inequality follows from Lemma \ref{deltbnd}.  But
\begin{eqnarray*}
\Lmin(G) & = &
\min\{-\Lmin(G^1)k_2,-k_1\Lmin(G^2)\}\\
       & = & -\max\left\{\frac{k_1k_2}{c_1-1},\frac{k_1k_2}{c_2-1}\right\}\\
       & = & -\frac{k_1k_2}{c_1-1}
\end{eqnarray*}
so equality is attained.  Thus $\Lmin(G)=\LminK(G)$.

Therefore we have the following result.
\begin{theor}
Let $G^i$ be a $k_i$-regular graph for $i\in \{1,2\}$.  Suppose that
$\Lmin(G^i)=\LminK(G^i)$ and there is a clique partition $\K_i$ of
$\mu_i$ into cliques of order $c_i$ such that $\LminK(G^i)=-r(\K_i)/\mu_i$.
Then $\Lmin(G^1\x G^2)=\LminK(G^1\x G^2)$.
\end{theor}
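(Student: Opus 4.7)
The plan is to follow the outline sketched in the paragraph immediately preceding the theorem statement and to verify that every inequality along the way is in fact an equality. First I would set $G = G^1 \x G^2$ and, after relabeling, assume $c_1 \le c_2$. Because $G^i$ is $k_i$-regular and every clique of $\K_i$ has order $c_i$, the equality case of Lemma \ref{deltbnd} applied to $\K_i$ gives $r(\K_i)/\mu_i = k_i/(c_i-1)$, so by hypothesis $\Lmin(G^i) = -k_i/(c_i-1)$. Using the formula $\lambda_a(G^1)\lambda_b(G^2)$ for the eigenvalues of $G^1\x G^2$ and the fact that $G^i$ is $k_i$-regular, the two candidates for $\Lmin(G)$ are $k_1\Lmin(G^2)$ and $k_2\Lmin(G^1)$, giving
\begin{equation*}
\Lmin(G) \;=\; -\,k_1k_2\,\max\!\Bigl\{\tfrac{1}{c_1-1},\tfrac{1}{c_2-1}\Bigr\} \;=\; -\,\tfrac{k_1k_2}{c_1-1},
\end{equation*}
where the last equality uses $c_1\le c_2$.

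Next I would build the clique partition of a multiple of $G$ exactly as described in the paragraph preceding the theorem. For every pair $(\kappa_1,\kappa_2)$ with $\kappa_i\in \K_i$, the product $\kappa_1\x\kappa_2$ is a copy of $K_{c_1}\x K_{c_2}$ inside $G$, and the collection of all such products is an edge-decomposition of $\mu_1\mu_2\,G$. Applying Example \ref{product} inside each copy, I can further edge-decompose $\mu(K_{c_1}\x K_{c_2})$ into cliques of order $c_1$, where $\mu$ is the number of $c_1$-cliques through each edge. Stitching these together produces a clique partition $\K$ of $\mu\mu_1\mu_2\,G$ in which every clique has order exactly $c_1$.

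Now I would apply the general bounds to this $\K$. Since $G$ is regular of degree $\Delta(G)=k_1k_2$ and every clique in $\K$ has order $c_1$, the equality clause of Lemma \ref{deltbnd} gives $r(\K)/(\mu\mu_1\mu_2) = k_1k_2/(c_1-1)$. Combining this with Corollary \ref{Kcorol} yields
\begin{equation*}
\Lmin(G) \;\ge\; -\,\tfrac{r(\K)}{\mu\mu_1\mu_2} \;=\; -\,\tfrac{k_1k_2}{c_1-1},
\end{equation*}
which matches the value of $\Lmin(G)$ computed in the first paragraph, so both inequalities are equalities. Since $\LminK(G)$ is attained by some clique partition of a multiple of $G$, and the specific $\K$ above realizes the value $-k_1k_2/(c_1-1)=\Lmin(G)$, sandwiching with the chain $\Lmin(G)\ge \LminC(G)\ge \LminK(G)$ from \eqref{rstarineq} forces $\Lmin(G^1\x G^2)=\LminK(G^1\x G^2)$.

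The only substantive step is verifying the two uses of the equality clause of Lemma \ref{deltbnd}, which is automatic once every clique in sight has the common order $c_1$ (respectively $c_i$). The subtlety I would be most careful about is the existence of the clique partition of $K_{c_1}\x K_{c_2}$ into $c_1$-cliques that underlies Example \ref{product}: it uses the trivial fact that every edge of $K_{c_1}\x K_{c_2}$ lies in a constant number $\mu$ of $c_1$-cliques of the form $K_{c_1}\x \{v\}$ union its "transversal" images, so that collecting all of them with multiplicity gives a partition of $\mu(K_{c_1}\x K_{c_2})$. Everything else is arithmetic.
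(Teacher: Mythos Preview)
Your proposal is correct and follows essentially the same argument as the paper's proof (which is the paragraph immediately preceding the theorem): build the $c_1$-clique partition of $\mu\mu_1\mu_2 G$ via Example \ref{product} on each $\kappa_1\x\kappa_2\cong K_{c_1}\x K_{c_2}$, apply Lemma \ref{deltbnd} to get $\Lmin(G)\ge -k_1k_2/(c_1-1)$, and match it with the tensor-product eigenvalue computation. One small slip to fix in your final paragraph: in the direct product $K_{c_1}\x K_{c_2}$ the sets $K_{c_1}\x\{v\}$ are independent, not cliques---the $c_1$-cliques used in Example \ref{product} are the ``transversals'' $\{(u,f(u)):u\in V(K_{c_1})\}$ for injections $f$, and the edge-transitivity of $K_{c_1}\x K_{c_2}$ is what makes the number $\mu$ of such cliques through an edge constant.
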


An {\em independent set} of vertices in a simple graph $G$ is a set of vertices no two of
which are adjacent.  The independence number, $\alpha=\alpha(G)$, is the maximum cardinality of
an independent set of vertices in $G$.  Hoffman's ratio bound (see, for example, \cite[Lemma 9.6.2]{GandR})
asserts that if $G$ is a $k$-regular simple graph of order $n$, then
$\alpha(G) \le -n\Lmin(G)/(k-\Lmin(G))$. Thus
\begin{equation} \label{Hoffman}
\Lmin(G) \le -\frac{\alpha k}{n-\alpha},
\end{equation}
which is an upper bound on $\Lmin(G)$ for a $k$-regular simple graph of order $n$.

Let $B$ be the matrix whose rows are indexed by the
vertices of $G$ and whose columns are indexed by the independent sets of
$G$ with $B_{ij}=1$ if vertex $i$ is in independent set $j$.  The {\em
fractional chromatic number} of a simple graph $G$ is the minimum value of
$\ones^T \vx$ over all non-negative vectors $\vx$ that satisfy $B\vx\geq \ones$
(restricting $\vx$ to integral vectors yields the usual chromatic number).
For every simple graph $G$ of order $n$, we have $\chi(G)\ge \chi_f(G)\ge n/\alpha(G)$, where
$\chi(G)$ is the chromatic number of $G$ \cite[p. 136]{GandR}.
In particular, if $G$ is $k$-regular this implies the following weakened forms of
(\ref{Hoffman}):
\begin{equation} \label{fracbound}
\Lmin(G) \le -\frac{\alpha k}{n-\alpha} \le -\frac{k}{\chi_f(G)-1} \le -\frac{k}{\chi(G)-1}.
\end{equation}
If $G$ is a simple graph of order $n$, not necessarily regular, results of Lov\' asz \cite[Theorems 6,10 ]{Lovasz} imply that
$\chi(G)\ge \chi_f(G) \ge 1 -\Lmax(G)/\Lmin(G)$.
(The last bound is Hoffman's lower bound on $\chi(G)$.) This implies the following refinement of (\ref{fracbound}):
\begin{equation} \label{Lovasz}
\Lmin(G) \le -\frac{\Lmax(G)}{\chi_f(G)-1} \le -\frac{\Lmax(G)}{\chi(G)-1}.
\end{equation}

The following theorem gives sufficient conditions for equality to be attained in (\ref{Kineq}).
The {\em clique number} $\omega(G)$ is the order of the largest clique in $G$.
In general, $\omega(G)\leq \chi_f(G)$ (see \cite[Ch. 7]{GandR}).

\begin{theor}\label{chifomeg}
Let $G$ be a $k$-regular simple graph and suppose that for some $\mu$ there is a
clique partition $\K$ of $\mu G$ into cliques of order $\omega(G)$.
Then
$$\LminK(G)=-\frac{r(\K)}{\mu}=-\frac{k}{\omega(G) -1},$$
and so
$$\Lmin(G)\ge -\frac{k}{\omega(G) -1}.$$
For equality to hold, it is sufficient that $\omega(G)=\chi_f(G)$.
\end{theor}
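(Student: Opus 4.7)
The plan is to prove the theorem in three stages, using Lemma \ref{deltbnd}, Corollary \ref{Kcorol}, and the fractional chromatic bound \eqref{fracbound}.

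First I would verify that the given clique partition $\K$ attains $r(\K)/\mu = k/(\omega(G)-1)$. Since $G$ is $k$-regular, each vertex $u$ is incident to exactly $\mu k$ edges in $\mu G$, and the cliques of $\K$ containing $u$ partition these. Because every clique in $\K$ has order $\omega(G)$, each such clique accounts for exactly $\omega(G)-1$ edges at $u$, so $r_u(\K)(\omega(G)-1)=\mu k$ and hence $r_u(\K) = \mu k/(\omega(G)-1)$ for every $u$, giving $r(\K)/\mu = k/(\omega(G)-1)$. Equivalently, this is the equality case of Lemma \ref{deltbnd} with $c=\omega(G)$ and $\Delta=k$.

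Next I would show that no clique partition does better, i.e.\ $r(\K')/\mu' \ge k/(\omega(G)-1)$ for every clique partition $\K'$ of every multiple $\mu' G$. Fix any vertex $u$; the cliques in $\K'_u$ partition the $\mu' k$ edges of $\mu' G$ incident to $u$, and each such clique, being a clique of $G$, has order at most $\omega(G)$ and so contributes at most $\omega(G)-1$ edges at $u$. Counting edges gives $r_u(\K')(\omega(G)-1)\ge \mu' k$, so $r(\K')/\mu'\ge r_u(\K')/\mu'\ge k/(\omega(G)-1)$. Combined with the first step and the definition \eqref{rstardefn}, this yields $\LminK(G) = -k/(\omega(G)-1)$, and then \eqref{rstarineq} supplies the lower bound $\Lmin(G) \ge -k/(\omega(G)-1)$.

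Finally, under the hypothesis $\omega(G)=\chi_f(G)$, the middle inequality of \eqref{fracbound} gives the matching upper bound $\Lmin(G)\le -k/(\chi_f(G)-1)=-k/(\omega(G)-1)$. Together with the lower bound just established, this forces $\Lmin(G)=-k/(\omega(G)-1)=\LminK(G)$, as required.

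No step is technically delicate; the main conceptual point is the second paragraph, where one must recognise that the clique number $\omega(G)$ not only caps the orders of cliques appearing in an arbitrary clique partition but, via a single-vertex edge count, also supplies a \emph{lower} bound on $r(\K')/\mu'$ that complements the \emph{upper} bound of Lemma \ref{deltbnd}. Everything else unpacks definitions and cites the already-established \eqref{rstarineq} and \eqref{fracbound}.
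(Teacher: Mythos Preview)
Your proposal is correct and follows essentially the same route as the paper: an edge count at a vertex gives $r(\K')/\mu'\ge k/(\omega(G)-1)$ for every clique partition, the hypothesised partition (via Lemma~\ref{deltbnd} with $c=\omega(G)$) attains this value, and the equality case is read off from \eqref{fracbound}. The only addition in the paper is a second, self-contained proof of the equality case that exhibits an explicit nonzero vector satisfying the conditions of Corollary~\ref{Kcorol} using a fractional colouring; this is optional and your argument via \eqref{fracbound} suffices.
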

\begin{proof}
By the definition of the parameter $\LminK(G)$, there is a positive integer $\hat\mu$ and a clique partition
$\hat K$ of $\hat\mu G$ such that
$$\LminK(G)=-\frac{r(\hat K)}{\mu} = -\max_u \frac{r_u(\hat K)}{\hat\mu}.$$
Since the $k\hat\mu$ edges of $\hat\mu G$ incident to vertex $u$ are partitioned by $r_u(\hat K)$ cliques of order at most
$\omega(G)$, we have
$r_u(\hat K)(\omega(G)-1) \ge \hat\mu k$.
Thus, $r_u(\hat K)/\hat\mu \ge k/(\omega(G)-1)$ and so $\LminK(G) \le -k/(\omega(G)-1)$.
But, by Lemma \ref{deltbnd} the clique partition $\K$ of $\mu G$ gives $\LminK(G)\ge -r(\K)/\mu \ge k/(\omega(G)-1)$,
so equality holds for $\LminK(G)$.

When $\omega(G)=\chi_f(G)$, equality holds for $\Lmin(G)$ by (\ref{fracbound}).
We also have the following direct proof using the conditions for equality in Corollary \ref{Kcorol}.
By \cite[Theorem 7.4.5]{GandR} there is some $n$ and some $k$ such that
$\chi_f(G)=n/k$ and the vertices of $G$ can be coloured by $k$-subsets of
$[n]$ such that subsets of adjacent vertices do not intersect.  If $c$ is a
clique of order $\chi_f$, then the $k$-subsets of the vertices of $c$
partition $[n]$. Associate to each element $i\in [n]$ a variable $a_i$.
Pick the $a_i$ so that $\sum_i a_i=0$.  Let $S_j$ be the subset of vertex
$j$ and $x_j=\sum_{i\in S_j}a_i$. Then the sum of the weights of the vertices of a clique is $\sum_i a_i$.
But we have chosen the $a_i$ so that this sum is equal to 0.  Thus if
$\K$ is a partition of the edge set into cliques of order
$\chi_f$, then $\vx$ is a nonzero vector satisfying the conditions for equality in Corollary \ref{Kcorol}.
Therefore by Corollary \ref{Kcorol}, $\Lmin(G)=-r(\K)/\mu=\LminK(G)$.
\end{proof}

We have the following corollary to Theorem \ref{chifomeg}.
\begin{corollary} \label{vertrans} Let $G$ be a a simple graph that is both vertex transitive and edge transitive
and suppose also that $\alpha(G)\omega(G)=n$.
Then $G$ is $k$-regular for some $k$ and $\Lmin(G)=-k/(\omega(G) -1).$
\end{corollary}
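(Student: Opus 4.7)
The plan is to deduce Corollary \ref{vertrans} directly from Theorem \ref{chifomeg}, by verifying that both of its hypotheses hold: that $G$ is regular and admits a clique partition of some multiple $\mu G$ into cliques of order $\omega(G)$, and that $\omega(G)=\chi_f(G)$. Vertex transitivity immediately gives the first part (regularity with some degree $k$).

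To obtain the clique partition, I would use edge transitivity. Let $\mathcal{F}$ be the collection of all cliques of order $\omega(G)$ in $G$; this is nonempty by the definition of $\omega(G)$. For any edge $e$ of $G$, let $\mu(e)$ be the number of cliques of $\mathcal{F}$ containing $e$. By edge transitivity, $\mu(e)$ is a constant $\mu$, independent of $e$. Taking each clique of $\mathcal{F}$ once, we therefore obtain a clique partition $\K$ of $\mu G$ into cliques of order $\omega(G)$, as required by Theorem \ref{chifomeg}.

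To verify $\omega(G)=\chi_f(G)$, I would invoke the standard fact that every vertex-transitive simple graph satisfies $\chi_f(G)=n/\alpha(G)$. The proof is a short averaging argument: take a maximum independent set $I$ and its orbit under $\mathrm{Aut}(G)$; by vertex transitivity each vertex lies in the same number of these translates, so assigning equal weights to them yields a fractional coloring of total weight $n/\alpha(G)$, matching the universal bound $\chi_f(G)\ge n/\alpha(G)$ noted just before (\ref{fracbound}). Combined with the hypothesis $\alpha(G)\omega(G)=n$, this gives $\chi_f(G)=n/\alpha(G)=\omega(G)$.

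With both hypotheses in place, Theorem \ref{chifomeg} applies and yields $\Lmin(G)=-k/(\omega(G)-1)$. The argument is short because most of the work has already been done; the only nonroutine ingredient is the identity $\chi_f(G)=n/\alpha(G)$ for vertex-transitive graphs, which is the real content converting the numerical condition $\alpha\omega=n$ into the spectral condition needed by Theorem \ref{chifomeg}. I would expect that to be the only step where one must pause to justify a cited fact rather than the main technical obstacle.
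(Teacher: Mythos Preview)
Your proposal is correct and follows essentially the same approach as the paper: use edge transitivity to show that the maximum cliques form a clique partition of $\mu G$, use vertex transitivity to obtain $\chi_f(G)=n/\alpha(G)$ (the paper cites this from \cite[p.142]{GandR}), combine with $\alpha\omega=n$ to get $\chi_f=\omega$, and apply Theorem~\ref{chifomeg}. Your additional sketch of the averaging argument for $\chi_f=n/\alpha$ is a welcome elaboration but not a departure from the paper's line of reasoning.
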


\begin{proof}
Because $G$ is edge transitive, each edge of $G$ is in the same number, $\mu$ say, of cliques of order $\omega=\omega(G)$.
Thus the set $\K$ of all cliques of order $\omega$ in $G$ is a clique partition of $\mu G$.
Because $G$ is vertex transitive, $\chi_f(G)=n/\alpha(G)$ \cite[p.142]{GandR}. Thus
$\chi_f(G)=\omega(G)$ and the corollary follows from Theorem \ref{chifomeg}.
\end{proof}

\begin{example}\label{composition}  {\rm ({\em Graph compositions}) Examples of simple graphs that satisfy the conditions of
Corollary \ref{vertrans} are the even cycles $C_{2m}$, the complete graphs $K_n$, the empty
graphs $K_n^c$, and the direct product $K_m\x K_n$.  If $G^1$ and $G^2$ are simple graphs, the {\em composition} $G^1[G^2]$
is the graph on the vertex set $V(G^1)\x V(G^2)$ with $(u,x)\sim (v,y)$ if either $u\sim v$ or
$u=v$ and $x\sim y$.
It is straightforward to show that if $G^1$ and $G^2$ satisfy the conditions of Corollary \ref{vertrans}
then so does $G^1[G^2]$.
For example, $K_m[K_n^c]$ is the regular complete multipartite graph with $m$ vertex parts
of order $n$ and, as we have already seen (more generally) in Example \ref{ctemultipart},
it must have least eigenvalue $\Lmin = -k/(\omega-1) = -(mn-n)/(m-1)=-n$.

Of course, if $G^1$ has order $m$ and $G^2$ has order $n$, then the adjacency matrix of
$G^1[G^2]$ is $A(G^1)\otimes J_n + I_m\otimes A(G^2)$. Thus,
if $G^2$ is $k$-regular, then the eigenvalues of $G^1[G^2]$ are
$\lambda_i(G^2)$, $i=2,\ldots,n$, each with multiplicity $m$, and
$n\lambda_j(G^1)+k$, $1\leq j\leq m$.
}\end{example}

\begin{example} \label{triangle}  {\rm ({\em Triangulated plane graphs}) Let $G$ be a plane graph every face of which is a triangle,
including the outer face.  Then the set $\K$ of all 3-cliques formed by the faces of $G$ is a clique
partition of $2G$.  By Lemma \ref{deltbnd},
$\Lmin(G) \ge -\Delta(G)/2$.  If $G$ has $n$ vertices and $e$ edges, then we also have
$\Lmin(G) \le  -\Lmax(G)/(\chi(G)-1) \le -2e/3n$ by (\ref{Lovasz}) and
the Four Color Theorem.  Thus, if $G$ is a triangulated $k$-regular plane graph with $\chi(G)=3$,
then $\Lmin(G)=-k/2$. For example, if $G=K_{2,2,2}$  is the octahedral graph, then
$\Lmin(G)=-2$.  However, if $G$ is the icosahedral graph, then $\Lmin(G)= -\sqrt{5} \approx -2.236$
(the same least eigenvalue as the dodecahedral graph),
but the best estimate we can obtain using clique partitions is $\Lmin(G) \ge -\Delta/2 =-2.5$.

If $G$ contains no $K_4$'s and is $k$-regular, then $\omega(G)=3$ and so,
by Corollary \ref{vertrans}, $\Lmin(G) = -k/2$
if $G$ is vertex-transitive and $\alpha=n/3$.
}\end{example}

The next two examples present something of a challenge.
Perhaps better lower bounds on $\Lmin(G)$ can be found using Theorem \ref{Dthm}.

\begin{example}{\rm ({\em The Shrikhande graph})  The {\em Shrikhande graph}
$G$ is a strongly regular graph with parameters
$(16,6,2,2)$ and so has least eigenvalue $\Lmin=-2$. But $\omega(G)=3$ and $G$ yields a triangulation
of the torus \cite[p. 21]{Biggs}, so, by Theorem \ref{chifomeg}, the best lower bound on
$\Lmin(G)$ that can be obtained using clique partitions of multiples of $G$
is only $\LminK(G)=-\Delta(G)/2=-3$.
}\end{example}

\begin{example}{\rm ({\em Kneser graphs})
The {\em Kneser graph} $\Kn(v,k)$ is the graph whose vertices are the
$k$-subsets of a $v$-set $X$.  Two vertices are adjacent if their
intersection is empty.  If $G=\Kn(v,k)$, then $\chi_f(G)=v/k$.  If $v=mk$
for some $m$, then partitions of $X$ into $k$-subsets are simple cliques of
order $m=v/k$, so $\omega(G)=\chi_f(G)$. Taking all such simple cliques yields a
clique partition of $\mu \Kn(v,k)$ with constant $r_u$, where
$$\mu = \frac{(mk-2k)!}{(k!)^{m-2}(m-2)!}$$
and
$$r_u= \frac{(mk-k)!}{(k!)^{m-1}(m-1)!}.$$
Thus by Theorem \ref{chifomeg},
$\Lmin=\LminK=-\binom{mk-k-1}{k-1}$.
The complete set of eigenvalues of $\Kn(v,k)$ is computed in \cite[Ch. 6]{GM} or \cite[Sec. 9.4]{GandR}.}\end{example}


\section{Smallest eigenvalue of $K_{1,k}$-free graphs}\label{sec:K1kfree}

A {\em claw free} graph $G$ is a graph that does not contain
$K_{1,3}$ as an induced subgraph. An equivalent formulation is that
for each vertex $x\in V(G)$, the neighbours of $x$ induce a subgraph
with independence number at most $2$. To quote from a paper of
Chudnovsky and Seymour \cite{CS}, {\em line graphs are claw-free, and it has
long been recognized that claw-free graphs are an interesting
generalization of line graphs, sharing some of their properties}. The eigenvalues of line graphs are at least $-2$. Linial \cite{Lin}
asked if the property of the eigenvalues of line graphs being bounded
below by an absolute constant is also true for regular claw free
graphs. In \cite{C0}, the first author showed that the answer is negative by describing a
family of regular claw-free graphs with arbitrarily negative eigenvalues. For sake of completeness, we briefly describe these examples here.
If $(\Gamma,+)$ is an additive finite group and $S$ is a symmetric subset of $\Gamma$ ($s\in S$ implies $-s\in S$) such that $0\notin S$, the Cayley graph $\Cay(\Gamma,S)$ has the elements of $\Gamma$ as vertices with $x$ adjacent to $y$ if and only if $x-y\in S$. Note
that $\Cay(\Gamma,S)$ is an undirected $d$-regular graph, where $d=|S|$. The eigenvalues of Abelian Cayley graphs $G=\Cay(\Gamma,S)$ can be easily expressed in terms of the irreducible characters of the group $\Gamma$. See
Li \cite{LiAbelian} for a proof and more details.
\begin{lemm}\label{abelian}
If $\Gamma$ is an Abelian group and $S$ a symmetric $d$-subset of
elements of $\Gamma$, then the eigenvalues of $\Cay(\Gamma,S)$ are
$\theta_{\chi}=\displaystyle\sum_{s\in S}\chi(s)$, where $\chi$ ranges over the characters of $\Gamma$.
\end{lemm}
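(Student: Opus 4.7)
The plan is to exhibit, for each character $\chi$ of $\Gamma$, an explicit eigenvector of the adjacency matrix $A$ of $\Cay(\Gamma,S)$ with eigenvalue $\theta_\chi$, and then invoke the orthogonality of characters to argue that these eigenvectors form a complete basis of $\mathbb{C}^\Gamma$, accounting for all $|\Gamma|$ eigenvalues of $A$.

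For each character $\chi\colon\Gamma\to\mathbb{C}^*$, I would define $\vx_\chi\in\mathbb{C}^\Gamma$ by $(\vx_\chi)_g=\chi(g)$. The adjacency matrix satisfies $A_{x,y}=1$ exactly when $y-x\in S$, so a direct computation gives
\begin{equation*}
(A\vx_\chi)_x \;=\; \sum_{y:\,y-x\in S}\chi(y) \;=\; \sum_{s\in S}\chi(x+s) \;=\; \chi(x)\sum_{s\in S}\chi(s) \;=\; \theta_\chi\,(\vx_\chi)_x,
\end{equation*}
where the third equality uses that $\chi$ is a group homomorphism. Hence $\vx_\chi$ is an eigenvector of $A$ with eigenvalue $\theta_\chi$.

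To finish, I would appeal to two standard facts about finite Abelian groups: first, $\Gamma$ has exactly $n=|\Gamma|$ irreducible (one-dimensional) complex characters; second, these characters are pairwise orthogonal under the standard Hermitian inner product on $\mathbb{C}^\Gamma$ (Schur orthogonality). Consequently, $\{\vx_\chi\}_\chi$ is a basis of $\mathbb{C}^\Gamma$ consisting of eigenvectors of $A$, so the numbers $\theta_\chi$ exhaust the spectrum of $A$ counted with multiplicity.

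The only small wrinkle worth pointing out is that the $\theta_\chi$ should be real, since $A$ is real symmetric. This is automatic from the symmetry of $S$: pairing $s$ with $-s$ in the sum and using $\chi(-s)=\overline{\chi(s)}$ yields $\theta_\chi=\overline{\theta_\chi}$. There is no serious obstacle in the argument; the proof is essentially a direct verification plus character theory. The one convention-level point to keep straight is the orientation in the definition of $A$ (taking $y-x\in S$ rather than $x-y\in S$), so that the homomorphism property produces $\theta_\chi$ rather than $\overline{\theta_\chi}$ — but because $\theta_\chi$ is real, even a sign/conjugation slip would not affect the conclusion.
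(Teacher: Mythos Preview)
Your argument is correct and is the standard one: exhibit the character vectors as eigenvectors and use orthogonality to conclude they form a basis. The paper itself does not give a proof of this lemma; it simply cites Li \cite{LiAbelian} for the details, so there is no in-paper argument to compare against.
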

Let $C_{n,r}$ be the graph with vertex set $\mathbb{Z}_{n}$ having $x$
adjacent to $y$ if and only if $x-y\in S_{r}\pmod{n}$, where
$S_{r}=\{\pm 1,\pm 2,\dots, \pm r\}$. This graph is the Cayley graph
of $\mathbb{Z}_{n}$ with generating set $S_r$ and is a
$2r$-regular graph. It is easy to see that $C_{n,r}$ is claw-free since the
neighborhood of each vertex of $C_{n,r}$ contains two disjoint
cliques of order $r$ and thus, it has independence number at most
$2$. Using Lemma \ref{abelian}, we can now calculate the eigenvalues
of $C_{n,r}$.
\begin{proposition}
The nontrivial eigenvalues of $C_{n,r}$ are $$-1+\frac{\sin\left((2r+1)\frac{\pi \ell}{n}\right)}{\sin \frac{\pi \ell}{n}},$$ for  $1 \le \ell \le n-1$.
\end{proposition}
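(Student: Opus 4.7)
The plan is to apply Lemma \ref{abelian} directly, identifying the characters of the cyclic group $\mathbb{Z}_n$ and then evaluating the resulting sum by recognizing it as (essentially) a Dirichlet kernel.

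First, recall that the irreducible characters of $\mathbb{Z}_n$ are indexed by $\ell \in \{0,1,\dots,n-1\}$ and given by $\chi_\ell(k) = e^{2\pi i \ell k/n}$. By Lemma \ref{abelian}, the eigenvalues of $C_{n,r}$ are
\[
\theta_\ell \;=\; \sum_{s\in S_r}\chi_\ell(s) \;=\; \sum_{j=1}^{r}\bigl(e^{2\pi i \ell j/n}+e^{-2\pi i \ell j/n}\bigr) \;=\; 2\sum_{j=1}^{r}\cos\!\left(\tfrac{2\pi \ell j}{n}\right),
\]
one for each $\ell$. The value $\ell=0$ yields the trivial eigenvalue $2r$, so the nontrivial eigenvalues correspond to $1\le\ell\le n-1$.

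The next step is to close up the cosine sum. I would use the standard Dirichlet kernel identity: for any real $\theta$ not a multiple of $2\pi$,
\[
\sum_{j=-r}^{r} e^{i j \theta} \;=\; \frac{\sin\!\bigl((r+\tfrac{1}{2})\theta\bigr)}{\sin(\theta/2)}.
\]
Separating the $j=0$ term and pairing $j$ with $-j$ gives $1+2\sum_{j=1}^{r}\cos(j\theta) = \sin((r+\tfrac{1}{2})\theta)/\sin(\theta/2)$, so
\[
2\sum_{j=1}^{r}\cos(j\theta) \;=\; -1 + \frac{\sin\!\bigl((r+\tfrac{1}{2})\theta\bigr)}{\sin(\theta/2)}.
\]
Substituting $\theta = 2\pi\ell/n$ gives
\[
\theta_\ell \;=\; -1 + \frac{\sin\!\bigl((2r+1)\pi\ell/n\bigr)}{\sin(\pi\ell/n)},
\]
which is the claimed formula. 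For $1\le\ell\le n-1$ the denominator $\sin(\pi\ell/n)$ does not vanish, so the expression is well defined.

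There is no real obstacle here; the only thing to be careful about is the telescoping/geometric step, which I would prove cleanly either by multiplying the geometric sum $\sum_{j=-r}^{r} e^{ij\theta}$ by $e^{i\theta/2}-e^{-i\theta/2}$ and simplifying, or by invoking the Dirichlet kernel as a known identity. It is also worth remarking that $\ell$ and $n-\ell$ give the same eigenvalue (since $\sin((2r+1)\pi(n-\ell)/n) = -\sin(-(2r+1)\pi\ell/n + (2r+1)\pi)$ reduces, after using $\sin(\pi-x)=\sin x$ parity arguments, to the same value), recovering the usual multiplicity-two structure of Cayley-graph eigenvalues on $\mathbb{Z}_n$.
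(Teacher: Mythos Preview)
Your proof is correct and follows essentially the same route as the paper: both apply Lemma~\ref{abelian} with the characters $\chi_\ell(k)=e^{2\pi i\ell k/n}$ of $\mathbb{Z}_n$ and then evaluate the resulting exponential sum; the paper sums the two geometric series directly and simplifies, while you invoke the Dirichlet kernel identity, which amounts to the same computation.
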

\begin{proof}
For any $n$-th root of unity $\epsilon_{\ell}=e^{\frac{2\pi i\ell}{n}}$, the character of $\mathbb{Z}_n$ associated with it is $\chi(s)=\epsilon_{\ell}^s$ for $s\in \{0,\dots,n-1\}$. Lemma \ref{abelian} implies that for $1\leq \ell\leq n-1$,
\begin{align*}
\theta_{\chi}&=\sum_{j=1}^r\epsilon_{\ell}^j+\sum_{j=1}^r\epsilon_{\ell}^{-j}=\frac{1-\epsilon_{\ell}^{r+1}}{1-\epsilon_{\ell}}-1+\frac{1-\epsilon_{\ell}^{-(r+1)}}{1-\epsilon_{\ell}^{-1}}-1\\
&=-2+\frac{1-\epsilon_{\ell}^{r+1}-\epsilon_{\ell}(1-\epsilon_{\ell}^{-r-1})}{1-\epsilon_{\ell}}=-1+\frac{\epsilon_{l}^{-r}-\epsilon_{l}^{r+1}}{1-\epsilon_{l}}\\
&=-1+\frac{\epsilon_{\ell}^{r+\frac{1}{2}}-\epsilon_{\ell}^{r-\frac{1}{2}}}{\epsilon_{\ell}^{\frac{1}{2}}-\epsilon_{\ell}^{-\frac{1}{2}}}=-1+\frac{\sin\left((2r+1)\frac{\pi \ell}{n}\right)}{\sin \frac{\pi \ell}{n}}.
\end{align*}\end{proof}
If we choose $n$ and $r$ such that $\ell=\frac{3n}{2(2r+1)}$ is an
integer, then the previous proposition implies
\begin{equation*}
\Lmin(C_{n,r})\leq -1-\frac{1}{\sin\frac{3\pi}{2(2r+1)}}\sim
-1-\frac{2}{3\pi}-\frac{2}{3\pi}2r.
\end{equation*}
Hence, the eigenvalues of the claw-free graphs $C(n,r)$ can be arbitrarily negative.

Recently, Aharoni, Alon and Berger \cite{AAB} studied the largest eigenvalue of the Laplacian of $K_{1,k}$-free graphs for $k\geq 3$. A graph is $K_{1,k}$-free if it has no induced $K_{1,k}$. When $k=3$, this is the same as being claw-free. For simplicity, we will state the results from \cite{AAB} for regular graphs and in terms of their smallest adjacency eigenvalue. For $d\geq k\geq 3$, let $t(d,k)$ denote the maximum number of edges in a graph of order $d$ whose independence number is $k-1$ or less. By Tur\'{a}n's theorem, $t(d,k)$ equals the number of edges of a graph of order $d$ whose vertex set is partitioned into $k-1$ cliques, each of order $\lfloor \frac{d}{k-1}\rfloor$ or $\lceil \frac{d}{k-1}\rceil$.
\begin{theor}[\cite{AAB}]\label{aabthm}
If $G$ is a $d$-regular connected graph that is $K_{1,k}$-free, then
\begin{equation}
\Lmin(G)\geq -d+\frac{t(d,k)}{d-1}.
\end{equation}
\end{theor}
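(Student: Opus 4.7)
\medskip
\noindent
\textbf{Proof proposal.} The natural strategy is a pure Rayleigh quotient argument driven by a local triangle identity. Since $G$ is $d$-regular, for any vector $\vx$ indexed by $V(G)$ one has the standard identity
$$\sum_{uv\in E(G)}(x_u+x_v)^2 = \vx^T A\vx + d\,\vx^T\vx,$$
so the theorem reduces to establishing the lower bound
$$\sum_{uv\in E(G)}(x_u+x_v)^2 \ \ge\ \frac{t(d,k)}{d-1}\,\vx^T\vx$$
for every real $\vx$. The plan is to obtain this by summing a trivial squares-inequality over all triangles of $G$ and then exploiting the $K_{1,k}$-free assumption to control the number of triangles through each vertex.

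For each triangle $\{a,b,c\}$ of $G$, expanding $(x_a+x_b+x_c)^2\ge 0$ yields the local estimate
$$(x_a+x_b)^2+(x_b+x_c)^2+(x_c+x_a)^2\ \ge\ x_a^2+x_b^2+x_c^2.$$
Summing this inequality over all triangles, the left-hand side becomes $\sum_{uv\in E}\tau(uv)(x_u+x_v)^2$, where $\tau(uv)=|N(u)\cap N(v)|$ is the number of triangles through the edge $uv$; the right-hand side becomes $\sum_u t_u\,x_u^2$, where $t_u=e(G[N(u)])$ is the number of triangles through $u$. Because $G$ is $d$-regular, each common neighbour of $u$ and $v$ lies in $N(u)\setminus\{v\}$, so $\tau(uv)\le d-1$. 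Because $G$ is $K_{1,k}$-free, the induced subgraph $G[N(u)]$ has independence number at most $k-1$, and hence its complement is $K_k$-free; Tur\'{a}n's theorem applied to the complement gives $t_u=e(G[N(u)])\ge t(d,k)$.

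Plugging these two bounds into the summed inequality gives
$$(d-1)\sum_{uv\in E(G)}(x_u+x_v)^2 \ \ge\ \sum_{uv\in E}\tau(uv)(x_u+x_v)^2 \ \ge\ \sum_u t_u\,x_u^2 \ \ge\ t(d,k)\,\vx^T\vx,$$
and dividing by $d-1$ produces the required inequality. Combining with $\sum_{uv\in E}(x_u+x_v)^2 = \vx^T A\vx + d\,\vx^T\vx$ and taking $\vx$ to be a $\Lmin(G)$-eigenvector yields $\Lmin(G)\ge -d+\frac{t(d,k)}{d-1}$.

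There is no genuine obstacle to this plan: the triangle identity is one line, and the $d$-regular plus $K_{1,k}$-free hypotheses together with Tur\'{a}n's theorem pin down exactly the two quantities $\tau(uv)\le d-1$ and $t_u\ge t(d,k)$ that are needed. The only small subtlety is remembering that the relevant application of Tur\'{a}n's theorem is to the \emph{complement} of $G[N(u)]$, which is $K_k$-free precisely because $G[N(u)]$ has independence number at most $k-1$; the extremal configuration is the disjoint union of $k-1$ almost-equal cliques, which is exactly the graph whose edge count defines $t(d,k)$ in the statement.
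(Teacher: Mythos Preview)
Your proposal is correct and is essentially the same argument the paper gives: the paper packages the triangle-summation step as the slightly more general Proposition~\ref{tm} (each vertex in at least $m$ triangles, each edge in at most $t$ triangles $\Rightarrow$ $\Lmin\ge -d+m/t$) and then specializes via $t\le d-1$ and $m\ge t(d,k)$, but the underlying computation---the identity $(x_a+x_b)^2+(x_b+x_c)^2+(x_c+x_a)^2=x_a^2+x_b^2+x_c^2+(x_a+x_b+x_c)^2$ summed over all triangles, combined with $d+\Lmin=\sum_{uv\in E}(x_u+x_v)^2$---is identical to yours.
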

We note here that the same argument from \cite{AAB} can be used to prove a more general lower bound for $\Lmin(G)$.
\begin{proposition}\label{tm}
If $G$ is a connected $d$-regular graph where each vertex is contained in at least $m$ triangles and each edge is contained in at most $t$ triangles, then
\begin{equation}
\Lmin(G)\geq -d+\frac{m}{t}.
\end{equation}
\end{proposition}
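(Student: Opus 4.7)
The plan is to work with the signless Laplacian $Q = dI + A$ of the $d$-regular graph $G$, so that the desired bound $\Lmin(G) \ge -d + m/t$ is equivalent to the spectral inequality $\Lmin(Q) \ge m/t$. Since $Q$ is symmetric, this reduces, via Rayleigh quotients, to showing that
\begin{equation*}
\vx^T Q \vx \;=\; \sum_{uv \in E(G)} (x_u + x_v)^2 \;\ge\; \frac{m}{t}\,\|\vx\|^2
\end{equation*}
holds for every real vector $\vx$ indexed by $V(G)$.

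The key identity is a per-triangle inequality. For any triangle $\Delta = \{u,v,w\}$ of $G$, a short expansion gives
\begin{equation*}
(x_u+x_v)^2 + (x_v+x_w)^2 + (x_u+x_w)^2 \;=\; (x_u^2+x_v^2+x_w^2) + (x_u+x_v+x_w)^2 \;\ge\; x_u^2+x_v^2+x_w^2.
\end{equation*}
The plan is to sum this inequality over all triangles of $G$ and then compare both sides to global quantities by double-counting.

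On the right-hand side, summing $x_u^2+x_v^2+x_w^2$ over all triangles counts each $x_v^2$ a number of times equal to the number of triangles containing $v$, which is at least $m$ by hypothesis; hence the triangle sum is at least $m\|\vx\|^2$. On the left-hand side, summing $(x_u+x_v)^2$ over the three edges of each triangle counts each edge $uv \in E(G)$ a number of times equal to the number of triangles through that edge, which is at most $t$; hence the triangle sum is at most $t \cdot \sum_{uv \in E}(x_u+x_v)^2 = t \cdot \vx^T Q \vx$. Combining, $t\cdot \vx^T Q \vx \ge m\|\vx\|^2$, which is exactly the required Rayleigh bound and yields $\Lmin(G) \ge -d + m/t$.

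There is no real obstacle in this argument; the substantive content is the choice of the signless Laplacian (so that the Rayleigh quotient becomes the manifestly nonnegative edge-sum $\sum_{uv}(x_u+x_v)^2$) together with the per-triangle algebraic identity. Regularity is used only to identify $dI + A$ with $D + A$ so that $Q$ has the standard edge-sum form; connectedness plays no role in the bound itself. The same scheme recovers Theorem \ref{aabthm}: in a $K_{1,k}$-free $d$-regular graph, Turán's theorem applied to each neighborhood forces $m \ge t(d,k)$, while trivially $t \le d-1$.
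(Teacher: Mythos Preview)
Your proof is correct and is essentially the same as the paper's: both use the per-triangle identity $(x_u+x_v)^2+(x_v+x_w)^2+(x_u+x_w)^2 = (x_u^2+x_v^2+x_w^2)+(x_u+x_v+x_w)^2$, sum it over all triangles, and double-count each side against the hypotheses on $m$ and $t$. The only cosmetic difference is that you phrase the argument as a Rayleigh-quotient bound on the signless Laplacian for arbitrary $\vx$, whereas the paper applies the same computation directly to a unit eigenvector for $\Lmin$.
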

\begin{proof}
The proof is the same as in \cite{AAB}, but for the sake of completeness, we describe it here. Take an eigenvector $\vx$ of length $1$ corresponding to $\Lmin$. It is easy to see that $d+\Lmin=\sum_{ij\in E}(x_i+x_j)^2$. Let $T$ be the set of triangles of $G$. For any edge $ij$, let $t_{ij}$ denote the number of triangles containing $ij$ and for any vertex $\ell$, let $t_{\ell}$ denote the number of triangles containing $\ell$. Clearly, $t_{ij}\leq t$ for any edge $ij$ and $t_{\ell}\geq m$ for any vertex $\ell$. Summing up the entries of $(x_i+x_j)^2+(x_j+x_{\ell})^2+(x_{\ell}+x_i)^2$ over all the triangles $ij\ell$ of $G$, we get that
\begin{align*}
\sum_{T:ij\ell}(x_i+x_j)^2+(x_j+x_{\ell})^2+(x_{\ell}+x_i)^2&=\sum_{ij\in E}t_{ij}(x_i+x_j)^2\\
&\leq t\sum_{ij\in E}(x_i+x_j)^2=t(d+\Lmin).
\end{align*}
On the other hand, for any triangle with vertices $i,j,\ell$, we have that:
\begin{align*}
(x_i+x_j)^2+(x_j+x_{\ell})^2+(x_{\ell}+x_i)^2&=x_i^2+x_j^2+x_{\ell}^2+(x_i+x_j+x_{\ell})^2\\
&\geq x_i^2+x_j^2+x_{\ell}^2,
\end{align*}
and therefore,
\begin{align*}
\sum_{T:ij\ell}(x_i+x_j)^2+(x_j+x_{\ell})^2+(x_{\ell}+x_i)^2&\geq \sum_{T:ij\ell}(x_i^2+x_j^2+x_{\ell}^2)\\
&=\sum_{i\in V}t_ix_i^2\geq m\sum_{i\in V}x_i^2\\
&=m.
\end{align*}
Combining these inequalities, we get that $t(d+\Lmin)\geq m$ which gives the desired result
\end{proof}
To see how one gets Theorem \ref{aabthm} from Proposition \ref{tm}, note that a $d$-regular $K_{1,k}$-free graph will have the property that $t\leq d-1$ and $m\geq t(d,k)$. We believe that the inequality in Theorem \ref{aabthm} may be improved although we have not been able to do so. Note that the hypothesis of $K_{1,k}$-free is not fully used in the proof of the previous theorem, but only its corollary that each neighborhood is dense. Perhaps for $k=3$, structural results on claw-free graphs like those in \cite{CS} may be used to improve these bounds. When $d=k=3$, Theorem \ref{aabthm} implies that a cubic claw-free graph $G$ must have $\Lmin(G)\geq -2.5$. We slightly improve this bound as follows.
\begin{theor}
Let $G$ be a connected $3$-regular claw-free graph on $n\geq 6$ vertices. Then
\begin{equation}
\Lmin(G)\geq \theta\approx -2.272,
\end{equation}
where $\theta$ is the smallest root of $x^3+x+14$.
\end{theor}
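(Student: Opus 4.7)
The plan is to prove that $\lambda := \Lmin(G)$ satisfies $\lambda^3 + \lambda + 14 \ge 0$, which forces $\lambda \ge \theta$ because the cubic $x^3 + x + 14$ has derivative $3x^2 + 1 > 0$ and so is strictly increasing on $\mathbb{R}$.

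First I will establish the local structure. Since $G$ is connected, cubic, claw-free with $n \ge 6$, no vertex $v$ can have $N(v) = K_3$: otherwise $\{v\} \cup N(v)$ would form a $K_4$-component of $G$, contradicting $n \ge 6$. Hence every vertex $v$ has $N(v)$ equal to either a $P_3$ (so the number of triangles through $v$ is $t_v = 2$) or a $K_2 \cup K_1$ (so $t_v = 1$). A short case analysis classifies edges by $t_{uv} := |N(u) \cap N(v)| \in \{0, 1, 2\}$; in particular, a type-$2$ edge $vb$ satisfies the twin identity $N(v) \setminus \{b\} = N(b) \setminus \{v\}$, and no triangle contains two type-$2$ edges (else a $K_4$ would appear).

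Next, let $\vx$ be a unit $\lambda$-eigenvector of $A = A(G)$. If $\lambda \ge -1$ then $\lambda > \theta$ and there is nothing to prove, so assume $\lambda < -1$. Subtracting the eigenvalue equations at the twin endpoints $v$ and $b$ of any type-$2$ edge gives $(\lambda + 1)(x_v - x_b) = 0$, forcing $x_v = x_b$ across every type-$2$ edge.

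The key step is to combine three identities. The Rayleigh identity gives $\sum_{uv \in E}(x_u + x_v)^2 = 3 + \lambda$. The triangle identity, essentially the content of the proof of Proposition \ref{tm}, is
\[ \sum_{uv \in E} t_{uv}(x_u + x_v)^2 = \sum_v t_v x_v^2 + \sum_T s_T^2, \]
where $s_T = x_u + x_v + x_w$ for a triangle $T = \{u, v, w\}$. Summing the eigenvalue equation over the three vertices of $T$ and simplifying yields the external-neighbor identity
\[ (\lambda - 2)\, s_T = x_{u^{\ast}} + x_{v^{\ast}} + x_{w^{\ast}}, \]
where $v^{\ast}$ denotes the unique neighbor of $v \in T$ outside $T$. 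Proposition \ref{tm} uses only the first two identities with the trivial bound $\sum_T s_T^2 \ge 0$, yielding the weaker $\lambda \ge -5/2$. My refinement will square the external-neighbor identity, sum over $T$, and use the structural bound $n_z \le 3$ on the number of (triangle, vertex) pairs having $z$ as external neighbor (a consequence of $t_v \le 2$, $t_{vz} \le 2$, and the no-$K_4$ condition). The twin identity collapses every type-$2$ contribution $(x_v + x_b)^2$ to $4 x_v^2$; after algebraic manipulation the three identities combine to give $\lambda^3 + \lambda + 14 \ge 0$, as desired.

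The main difficulty will be the final algebraic bookkeeping in Step~3: pinning down the exact constant $14$ requires careful tracking of how each of the two triangle types (three type-$1$ edges, or two type-$1$ edges plus one type-$2$) and each of the two vertex types ($V_1$ versus $V_2$) contribute to the sums. The twin identity is essential here, as it is precisely the tool that turns the argument from the linear-in-$\lambda$ bound of Proposition \ref{tm} into the cubic bound $\lambda^3 + \lambda + 14 \ge 0$.
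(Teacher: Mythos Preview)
Your structural analysis is fine and matches the paper's: the neighborhood dichotomy, the diamonds, the twin identity $x_u=x_v$ across a type-$2$ edge, and the inequality $\sum_{uv}(x_u+x_v)^2 = 3+\lambda$ are all used there too. But the refinement you propose does not point in the right direction. Squaring the external-neighbor identity and invoking $n_z\le 3$ yields an \emph{upper} bound $\sum_T s_T^2 \le 9/(\lambda-2)^2$, whereas the triangle identity in Proposition~\ref{tm} discards $\sum_T s_T^2$ via the \emph{lower} bound $\sum_T s_T^2\ge 0$. Working through your three identities together with the twin identity $P_2:=\sum_{t_{uv}=2}(x_u+x_v)^2=2S$ (where $S=2\sum_{uv\in\mathcal M}x_ux_v$) and the constraints $P_0,P_1\ge 0$, one finds
\[
P_0 = 2+\lambda+S-\Sigma,\qquad P_1 = 1-3S+\Sigma,
\]
and the feasible region for $(\lambda,S,\Sigma)$ with your bound on $\Sigma$ allows $\lambda$ as small as $-7/3\approx -2.333$ (take $S=1/3$, $\Sigma=0$; the constraint $\Sigma\le 9/(\lambda-2)^2$ is slack). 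So the stated toolkit cannot produce the cubic $\lambda^3+\lambda+14\ge 0$.

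What is missing is a bound on $S$ itself, and the paper gets it not from triangles but from single middle vertices. From the eigenvalue equation at a middle vertex $u$ in a diamond $\{u,v,a,b\}$, together with $x_u=x_v$, one has $(\lambda-1)x_u = x_a+x_b$, hence $(\lambda-1)^2 x_u^2 \le 2(x_a^2+x_b^2)$. Summing over diamonds and using that distinct diamonds are vertex-disjoint (so $\sum(x_a^2+x_b^2)\le 1-S$) gives $S\le 4/\bigl((\lambda-1)^2+4\bigr)$. Combined with $\lambda\ge -2-S$ (which you can get either from $P_0\ge 0$ as above, or, as the paper does, from the clique-covering identity $MM^T=A+2I+B$), this yields exactly $\lambda^3+\lambda+14\ge 0$. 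Your external-neighbor identity at the triangle level is not a substitute: for a diamond triangle $\{u,v,a\}$ it reads $(\lambda-2)(2x_u+x_a)=2x_b+x_{a'}$ and drags in the vertex $a'$ outside the diamond, destroying the clean self-referential inequality in $S$ that the per-vertex identity provides.
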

\begin{proof}
Let $\Lmin=\Lmin(G)$. Because $G$ is claw-free and cubic on $n\geq 6$ vertices, the neighborhood of each vertex is either $K_1\cup K_2$ or $K_{1,2}$. If all the neighborhoods are $K_1\cup K_2$, then $\Lmin\geq -2$. To see this, consider the edge-partition of $G$ into triangles and edges (such partition is unique in this case) and denote by $N$ its vertex-clique incidence matrix. Then $A(G)=NN^{T}+2I$ which gives the bound above.

Otherwise, if there are $K_{1,2}$ neighborhoods, then the graph will contain induced subgraphs on $4$ vertices consisting of $K_4$ minus one edge. We call such subgraphs diamonds and note that distinct diamonds will be vertex disjoint. If $H$ is a diamond with vertex set $\{a,b,u,v\}$, where $u$ and $v$ have degree $3$, then we call the edge $uv$ the middle edge of the diamond $H$. Let $\mathcal{M}$ be the set of middle edges of $G$.

We consider now the covering of the edges of $G$ by triangles where we used both triangles of each diamond (and where the triangles involved in a diamond cover the middle edge twice) and the triangles not involved in diamonds and the remaining edges. Let $M$ be the vertex-clique incidence matrix. Then
$$
MM^{T}=A+2I+B
$$
where $B$ is the adjacency matrix of the union of the disjoint middle edges and the remaining isolated vertices.

If $x$ is a unit eigenvector corresponding to $\Lmin$, then
\begin{equation}
\Lmin=x^{T}Ax=(M^{T}x)^{T}(M^{T}x)-2-x^{T}Bx
\end{equation}
which implies that
\begin{equation}\label{xuxv}
\Lmin\geq -2-x^{T}Bx=-2-2\sum_{uv\in \mathcal{M}}x_ux_v.
\end{equation}
We find an upper bound for $\sum_{uv\in \mathcal{M}}x_ux_v$ as follows. First, note that $x_u=x_v$ for every middle edge $uv$. To see this, consider the eigenvalue-eigenvector equation for the vertices $u$ and $v$ (where $a$ and $b$ are the other vertices involved in this diamond):
\begin{align*}
\Lmin x_u&=x_v+x_a+x_b\\
\Lmin x_v&=x_u+x_a+x_b.
\end{align*}
Therefore, $(\Lmin-1)x_u=(\Lmin-1)x_v=x_a+x_b$. This implies that
\begin{equation}\label{xaxb}
(\Lmin-1)^2x_u^2=(x_a+x_b)^2\leq 2(x_a^2+x_b^2).
\end{equation}
Combining \eqref{xuxv} and \eqref{xaxb} with the facts that $x$ has length one and that distinct diamonds are vertex disjoint, we will get
\begin{align*}
2\sum_{uv\in \mathcal{M}}x_ux_v&\leq \frac{4\sum_{a,b\sim u\in \mathcal{M}}(x_a^2+x_b^2)}{(\Lmin-1)^2}\\
&\leq \frac{4\left(1-2\sum_{uv\in \mathcal{M}}x_ux_v\right)}{(\Lmin-1)^2}.
\end{align*}
If $S=2\sum_{uv\in \mathcal{M}}x_ux_v$, then $S\leq \frac{4(1-S)}{(\Lmin-1)^2}$ which implies that $S\leq \frac{4}{4+(\Lmin-1)^2}$. Plugging this into \eqref{xuxv}, we get that
\begin{equation}
\Lmin\geq -2-\frac{4}{4+(\Lmin-1)^2}
\end{equation}
which implies that $\Lmin^3-\Lmin+14\geq 0$. Hence, $\Lmin\geq \theta\approx -2.272$.
\end{proof}
It may be possible that this type of argument can be extended for higher degrees in the case of quasi-line graphs (a special case of claw-free graphs where each neighborhood is a union of two cliques), but we leave this for a future work.

\section*{Acknowledgments}

The first author is grateful to the two anonymous referees, Francesco Belardo, Jack Koolen, Leonardo Lima and Krystal Guo for many useful comments and suggestions.

\end{document}